\theoremstyle{plain}
\newtheorem{theorem}{Theorem} 
\newtheorem{corollary}{Corollary}
\newtheorem{lemma}{Lemma}
\newcommand{\nwc}{\newcommand}
\nwc{\Levy}{L\'{e}vy}
\nwc{\Holder}{H\"{o}lder}
\nwc{\cadlag}{c\`{a}dl\`{a}g}
\nwc{\be}{\begin{equation}}
\nwc{\ee}{\end{equation}}
\nwc{\ba}{\begin{eqnarray}}
\nwc{\ea}{\end{eqnarray}}
\nwc{\la}{\label}
\nwc{\nn}{\nonumber}
\nwc{\Z}{\mathbb{Z}}
\nwc{\C}{\mathbb{C}}
\nwc{\E}{\mathbb{E}}
\nwc{\R}{\mathbb{R}}
\nwc{\N}{\mathbb{N}}
\nwc{\prob}{\mathbb{P}}
\nwc{\Skor}{\mathbb{D}}
\nwc{\PP}{\mathcal{P}}
\nwc{\M}{\mathcal{M}}
\nwc{\law}{\stackrel{\mathcal{L}}{\rightarrow}}
\nwc{\eqd}{\stackrel{\mathcal{L}}{=}}
\nwc{\vp}{\varphi}
\nwc{\Vp}{\Phi}
\nwc{\veps}{\varepsilon}
\nwc{\eps}{\ve}
\nwc{\qref}[1]{(\ref{#1})}
\nwc{\D}{\partial}
\nwc{\dnto}{\downarrow}
\nwc{\nsup}{^{(n)}}
\nwc{\ksup}{^{(k)}}
\nwc{\jsup}{^{(j)}}
\nwc{\nksup}{^{(n_k)}}
\nwc{\inv}{^{-1}}
\nwc{\one}{\mathbf{1}}
\nwc{\argmin}{\mathrm{arg}^+\mathrm{min}}
\nwc{\argmax}{\mathrm{arg}^+\mathrm{max}}
\nwc{\Rplus}{\R_+}
\nwc{\Rorder}{\R_<}
\nwc{\xx}{\mathbf{x}}
\nwc{\emp}{\mu}
\nwc{\empN}{F^n}
\nwc{\lossN}{L^n}
\nwc{\Lip}{\mathrm{Lip}}
\nwc{\BL}{\mathrm{BL}}
\nwc{\ddm}{d}
\theoremstyle{definition}
\newtheorem{remark}[theorem]{Remark}
\theoremstyle{remark}
\numberwithin{equation}{section}
\numberwithin{figure}{section}
\begin{document}

\title{Concentration inequalities for a removal-driven thinning process\\} 
\author{Joe Klobusicky\textsuperscript{1}, Govind Menon\textsuperscript{2}}

\date{\today}

\maketitle

\begin{abstract}
We prove exponential concentration estimates and a strong law of large numbers for a particle system that is  the simplest representative of a general class of models for 2D grain boundary coarsening introduced in~\cite{JK}.  The system consists of $n$ particles in $(0,\infty)$ that move at unit speed to the left. Each time a particle hits the boundary point $0$, it is removed from the system along with a second particle chosen uniformly from the particles in $(0,\infty)$. Under the assumption that the initial empirical measure of the particle system converges weakly to a measure with density $f_0(x) \in L^1_+(0,\infty)$, the empirical measure  of the particle system at time $t$ is shown to converge to the measure with density $f(x,t)$, where $f$ is the unique solution to the kinetic equation with nonlinear boundary coupling
\be
\nn
\partial_t f (x,t) - \partial_x f(x,t) = -\frac{f(0,t)}{\int_0^\infty f(y,t)\, dy} f(x,t), \quad 0<x < \infty, 
\ee
and initial condition $f(x,0)=f_0(x)$. 

The proof relies on a concentration inequality for an urn model studied by Pittel,  and Maurey's concentration inequality for Lipschitz functions on the permutation group.

\end{abstract}

\smallskip
\noindent
{\bf MSC classification:} 35R60,  60K25, 82C23, 82C70

\smallskip
\noindent
{\bf Keywords:} Piecewise-deterministic Markov process, functional law of large numbers, diminishing urns.

\medskip
\noindent
\footnotetext[1]
{Department of Mathematical Sciences, Rensselaer Polytechnic Institute, 110 8th Street, Troy, NY 12180
Email: klobuj@rpi.edu}

\footnotetext[2]
{Division of Applied Mathematics, Box F, Brown University, Providence, RI 02912.
Email: govind\_menon@brown.edu}

\section{Introduction}
\subsection{The kinetic equation and particle system}
An important theme in kinetic theory is to rigorously derive kinetic equations
as  hydrodynamic limits of simpler particle models. In this paper, we study the  transport equation with nonlinear boundary coupling
\ba
\label{eq:ke}
\partial_t f (x,t) - \partial_x f(x,t) &=& - \frac{f(0,t)}{M(t)} f(x,t),
\quad 0<x < \infty, \\
M(t) &=& \int_0^\infty f(x,t) \, dx,
\ea
for a positive density $f(x,t)$ with  initial condition $f(x,0)=f_0(x)$.
The associated particle system consists of $n$ particles in $(0,\infty)$
that move at unit speed to the left. Each time a particle hits the boundary
point $0$, it is removed from the system along with a second particle chosen
uniformly from the particles in $(0,\infty)$.

It is not hard to show that the kinetic equation
\qref{eq:ke} is exactly solvable. However, it is not entirely straightforward to show that the kinetic equation describes the law of large numbers for the particle system. The  difficulty is  that the time between random jumps (the `internal clock' of the system) is a deterministic function of the state immediately after each jump. The main purpose of this paper is to establish exponential concentration estimates, especially for the internal clock, that allow us to rigorously establish \qref{eq:ke} starting from the particle system.

Our particle system also has interesting connections to two discrete sampling models.
The first  model, studied in  Section~\ref{sec:urn}, is an example of a \textit{diminishing urn}.  In such a model, balls are painted one of two colors (say white and red) and placed in an urn. Balls are either removed from or added into the urn through some  predetermined
drawing rule. Typically, draws are repeated until no red balls in the urn remain. The main quantity of interest is the number of white balls left. Despite the simplicity of this model, closed-form expressions for  statistics of most diminishing urns are difficult to obtain, though several limit distributions have been obtained by generating function methods~\cite{Flajolet,Hwang,Pittel}.~\footnote{One exception, the `pills problem' posed by Knuth~and McCarthy \cite{Hesterberg,Knuth}, may be solved by a clever elementary counting argument.}  In Section~\ref{subsec:urn}, we use a recurrence relation for moment generating functions to establish asymptotic normality for the number of particles lost at time $t$ in our particle system. The proof technique follows Pittel~\cite{Pittel}. 

The second model, described in Section~\ref{sec:thinning}, is an instance of \textit{two-phase sampling}.  Particles on the positive real line are sampled without replacement from a larger collection of particles whose empirical distribution approximates
some known population density. If the particles were sampled with replacement, we are in the setting of the Glivenko-Cantelli theorem, and the DKW inequality~\cite{Dvoretzky} may be used to show that the empirical distributions converge exponentially fast to the  limit distribution. While such theorems also exist in the case without replacement (see~\cite{Saegusa}, for instance), we provide a new argument of exponential convergence using a modification of Maurey's concentration inequality for Lipschitz functions on the symmetric group. The use of Maurey's inequality in this setting is one of the main technical novelties of our work.  This proof could be of independent interest to  probabilists  interested in sampling and queueing theory.

In Sections~\ref{sec:conpoint} and~\ref{sec:unifconc}, we combine the above models to obtain exponential concentration inequalities and a completely transparent proof of convergence of the empirical distributions of the particle system to the hydrodynamic limit described by \qref{eq:ke}. The main functional law of large numbers is closely related to the
Glivenko-Cantelli theorem, and by analogy suggests a uniform central limit theorem to describe fluctuations from the hydrodynamic limit. We hope to address these issues
in future work.

\subsection{Kinetic equations for grain boundary evolution}  
The kinetic equation and limit theorem in this work were motivated by domain coarsening in two-dimensional cellular networks, in particular isotropic grain boundary networks and soap froth. A fundamental aspect of the evolution of these cellular networks, discovered by Mullins and Von Neumann~\cite{Mullins,vonNeumann},
is that the rate of change of area of an $s$-sided cell is a constant multiple
of $s-6$. Thus, the rate of change of area depends only on the topology of a cell (the number of sides), and not its geometry. Further, each cell with fewer than six sides vanishes in finite time. It follows that the kinetics of the cellular network is driven by a smooth evolution, punctuated by singular `vanishing events' when cells with positive
area gain or lose sides as a  neighboring cell shrinks to zero area. 

In the 1980s and 1990s, several physicists postulated mean-field kinetic equations to describe this process in the limit when the number of cells is large~\cite{Beenakker,Flyvbjerg,Fradkov2,Marder}. 
These models have the common form
\be
\label{eq:ke-big}
\partial_t f_s + (s-6) \partial_x f_s = \sum_{l=2}^5  (l-6) f_l(0,t) \left(
\sum_{m=2}^M A_{lm}(t) f_m(x,t)\right), \quad s=2, \ldots, M.
\ee
Here the index $s$ (for `species') describes the number of sides of the cells (its topological class), and ranges from $2$ to a maximal number $M >6$; $f_s(x,t)$ denotes the number density of $s$-sides particles with area $x$ at time $t>0$. The  common feature of these equations is that the flux into and out of
species $s$ depend on the rate at which the left-moving populations $f_l$,
$l=2,\ldots,5$, hit the origin. The matrix $A_{lm}(t)$ describes the rates at which cells switch topological class as they gain or lose edges as small cells vanish. It is obtained 
by a different {\em ad hoc\/} assumption in each work, and while each kinetic equation matches some of the experimental data, there appears to have been no side-to-side comparison of the different models.

More recently, applied mathematicians have performed extensive computational experiments on the evolution of such networks~\cite{Barmak2,Elsey2,Henseler1,Lazar2}. 
Further, there has also been some rigorous analysis of kinetic models of the type~\qref{eq:ke-big} and related stochastic models~\cite{Cohen}. In recent work~\cite{JK}, one of the authors introduced a stochastic multi-species particle system in order to obtain a rigorous foundation for \qref{eq:ke-big}. Amongst other goals, this model was introduced to evaluate the often contradictory geometric assumptions used by physicists to determine the differing right hand sides of~\qref{eq:ke-big}, in light of current computational knowledge of grain boundary evolution. 

One of the rigorous results in~\cite{JK} is a hydrodynamic limit theorem for equation \qref{eq:ke-big}. The associated particle system consists of $n$ particles, partitioned
into $n_s$ particles of each species $s$, with areas $0< x_{s,1}< x_{s,2}<
\ldots < x_{s,n_s}$. The dynamics of the system consists of pure drift --particles
of species $s$ move  with constant velocity $s-6$ -- combined with stochastic
mutations when a particle vanishes,  meaning $x_{s,1}=0$ for one of the species. 
As in the one-species particle model for the kinetic equation \qref{eq:ke}, the removal times in the multi-species models are random, but depend deterministically on the state of the system immediately after a mutation. This nontrivial coupling between mutation
and removal is the main obstruction to proofs of  hydrodynamic limit theorems. To further complicate matters, species grow at different rates,  so  that in a generic realization some particles will grow during some intervals, and shrink in others. The model \qref{eq:ke} and convergence theorems presented in this paper arose from a desire to isolate the role of particle removal in \qref{eq:ke-big}. While equation~\qref{eq:ke-big} is not exactly solvable, we hope that the use of multi-species urn models and thinning estimates as in this work provide analogous contraction estimates for \qref{eq:ke-big}. 

%,   and ultimately obtain
%a contraction estimate with sharp rates of convergence for the kinetic equation
%\qref{eq:ke-big}. The model \qref{eq:ke} captures the essence of the limit theorem in~\cite{JK} by focusing attention on the role of the `internal clock' of the system in the limit theorem. 

\subsection{Statement of results}
\subsubsection{Wellposedness of the kinetic equation}
\label{subsec:wellp}
Despite the nonlinear term on the right hand side, the kinetic equation \qref{eq:ke} is exactly solvable. Let $L^1_+$ denote the cone of non-negative functions in $L^1(0,\infty)$ equipped with the norm topology.  We first obtain a formula for classical solutions to \qref{eq:ke}. We then use this  formula to define solutions in $L^1_+$. 
\begin{theorem}
\label{thm:well-posedness}
(a) Assume $f_0 \in L^1_+\cap C^1$. There exists a unique solution to \qref{eq:ke} with $f(x,0)=f_0(x)$. The solution is given by the formula
\be
\label{eq:sol-formula}
f(x,t) = \rho(t) f_0(x+t), \quad \rho(t)  = \frac{\int_t^\infty f_0(y) \, dy}{\int_0^\infty f_0(s)\, ds}.
\ee
(b) The formula \qref{eq:sol-formula} defines a continuous dynamical system in $L^1_+$. That is the map $(t,f_0) \mapsto f(\cdot,t)$ is in $C([0,\infty)\times L^1_+, L^1_+)$.
\end{theorem}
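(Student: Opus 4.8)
The plan is to verify part (a) by direct substitution and a uniqueness argument, then derive part (b) as a consequence of the explicit formula together with standard continuity properties of translation and integration on $L^1$. For part (a), I would first check that the proposed $f(x,t) = \rho(t) f_0(x+t)$ with $\rho(t) = \int_t^\infty f_0(y)\,dy \big/ \int_0^\infty f_0(s)\,ds$ actually solves \qref{eq:ke}. Since $f_0 \in C^1$, the map $(x,t)\mapsto f_0(x+t)$ is $C^1$ with $\partial_t f_0(x+t) = \partial_x f_0(x+t)$, and $\rho$ is $C^1$ with $\rho'(t) = -f_0(t)/\int_0^\infty f_0(s)\,ds$. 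Computing $\partial_t f - \partial_x f = \rho'(t) f_0(x+t) + \rho(t)\big(\partial_t - \partial_x\big) f_0(x+t) = \rho'(t) f_0(x+t)$. On the other hand, $M(t) = \int_0^\infty \rho(t) f_0(x+t)\,dx = \rho(t)\int_t^\infty f_0(y)\,dy$, and $f(0,t) = \rho(t) f_0(t)$, so the right-hand side of \qref{eq:ke} equals $-\big(\rho(t) f_0(t)\big)\big/\big(\rho(t)\int_t^\infty f_0(y)\,dy\big)\cdot \rho(t) f_0(x+t) = -f_0(t) f_0(x+t)\big/\int_t^\infty f_0(y)\,dy$. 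Matching this with $\rho'(t) f_0(x+t)$ forces exactly the stated expression for $\rho'$, which is what we computed; so the formula is a solution, and clearly $f(\cdot,0) = f_0$ and $f(\cdot,t)\in L^1_+$ for all $t$.

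For uniqueness within $C^1$, I would argue along characteristics. Writing $g(x,t) = f(x-t,t)$ (shifting to a moving frame) converts the transport part to a pure time derivative, and the equation becomes $\partial_t g(x,t) = -\lambda(t) g(x,t)$ with $\lambda(t) = f(0,t)/M(t)$, which integrates to $g(x,t) = f_0(x)\exp\big(-\int_0^t \lambda(s)\,ds\big)$, i.e. $f(x,t) = f_0(x+t)\exp\big(-\int_0^t\lambda(s)\,ds\big)$ for any solution. Thus any solution has the form $f(x,t) = a(t) f_0(x+t)$ with $a(t) = \exp(-\int_0^t\lambda(s)\,ds) > 0$; plugging this ansatz back into the definition of $\lambda$ gives $\lambda(t) = f_0(t)/\int_t^\infty f_0(y)\,dy$, a quantity determined entirely by $f_0$, so $a(t)$ — and hence $f$ — is uniquely determined and must coincide with the formula \qref{eq:sol-formula}. (One should note that $\int_t^\infty f_0(y)\,dy > 0$ for all finite $t$ when $f_0\not\equiv 0$; the zero solution is handled trivially.)

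For part (b), the content is joint continuity of $(t,f_0)\mapsto \rho(t)\, \tau_t f_0$ in $L^1_+$, where $\tau_t$ denotes left-translation by $t$. Two classical facts do the work: translation is a strongly continuous semigroup on $L^1(0,\infty)$ (when we extend functions by $0$), so $t\mapsto \tau_t f_0$ is norm-continuous and $\|\tau_t f_0\|_{L^1} \le \|f_0\|_{L^1}$; and $f_0\mapsto \tau_t f_0$ is an isometry, so it is trivially continuous in $f_0$ uniformly in $t$. Meanwhile $\rho(t)$ depends continuously on $(t,f_0)$: the denominator $\|f_0\|_{L^1}$ is continuous (and bounded below on any neighborhood of a nonzero $f_0$), and $(t,f_0)\mapsto \int_t^\infty f_0(y)\,dy$ is continuous since it changes by at most $|\int_t^{t'} f_0| + \|f_0 - f_0'\|_{L^1}$. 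Combining via $\|\rho(t)\tau_t f_0 - \rho(t')\tau_{t'} f_0'\|_{L^1} \le |\rho(t)|\,\|\tau_t f_0 - \tau_{t'} f_0'\|_{L^1} + |\rho(t) - \rho(t')|\,\|f_0'\|_{L^1}$ and a further triangle-inequality split $\|\tau_t f_0 - \tau_{t'}f_0'\|_{L^1} \le \|\tau_t f_0 - \tau_{t'} f_0\|_{L^1} + \|f_0 - f_0'\|_{L^1}$ yields joint continuity; finally one extends the solution formula from the dense subset $L^1_+\cap C^1$ to all of $L^1_+$ by continuity (and checks it still maps into $L^1_+$, which is immediate since $\rho(t)\ge 0$ and $\tau_t$ preserves nonnegativity).

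I expect the only genuinely delicate point to be the extension/definition step in part (b): one must make sure the formula \qref{eq:sol-formula} makes sense verbatim for $f_0\in L^1_+$ (it does — $\tau_t f_0$ and the two integrals are all well-defined for $L^1$ data) and that "solution" in the sense of part (b) is understood as this continuous extension rather than a classical solution, since $f_0$ need not be differentiable. Everything else is routine manipulation of the explicit formula and standard $L^1$ estimates.
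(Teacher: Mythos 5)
Your approach mirrors the paper's: verify the formula by direct substitution for part (a), and for part (b) observe that the formula makes sense for arbitrary $L^1_+$ data and is jointly continuous because left translation is strongly continuous on $L^1(0,\infty)$ and $\rho$ depends continuously on $(t,f_0)$. Your characteristics argument for uniqueness is a useful supplement, since the paper asserts uniqueness but does not write it out.

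Two slips worth flagging. In the verification you simplify $-\big(\rho(t) f_0(t)\big)\big/\big(\rho(t)\int_t^\infty f_0\big)\cdot\rho(t)f_0(x+t)$ to $-f_0(t)f_0(x+t)\big/\int_t^\infty f_0$, dropping a factor of $\rho(t)$; retaining it and using $\rho(t)=\int_t^\infty f_0\big/\int_0^\infty f_0$ gives the right-hand side as $-f_0(t)f_0(x+t)\big/\int_0^\infty f_0$, which is what matches $\rho'(t)f_0(x+t)$. As written, the ``matching'' step equates $-f_0(t)\big/\int_t^\infty f_0$ with your computed $\rho'(t)=-f_0(t)\big/\int_0^\infty f_0$, which agree only at $t=0$. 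More substantively, the parenthetical claim that $\int_t^\infty f_0(y)\,dy>0$ for all finite $t$ whenever $f_0\not\equiv 0$ is false: $C^1\cap L^1_+$ contains compactly supported densities, for which the tail integral vanishes for $t\geq t_*$. Your characteristics argument then only determines $f$ for $t<t_*$, where $\lambda(t)=f_0(t)\big/\int_t^\infty f_0$ is defined; one must observe separately that $f(\cdot,t)\to 0$ in $L^1$ as $t\uparrow t_*$ and that $f\equiv 0$ thereafter. This is exactly the compact-support case the paper's proof addresses explicitly, noting that the formula continues to hold with $f(x,t)\equiv 0$ for $t\geq t_*$.
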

\begin{proof}
(a) Observe that the kinetic equation \qref{eq:ke} scales like a linear equation. Therefore, without loss of generality we may assume that 
\be
\label{eq:sok-formula-corr}
M(0)= \int_0^\infty f_0(s) \, ds =1, \quad \rho(t) = \int_t^\infty f_0(s)\, ds.
\ee
We first check the solution formula under the assumption that $f_0$ is a smooth strictly positive probability density and $f(x,t)$ is given by \qref{eq:sol-formula}. Then the total number of particles is 
\be
\label{eq:number}
M(t) = \int_0^\infty f(x,t) \, dt = \rho(t) \int_0^\infty f_0(x+t) \, dx = \rho(t)^2. 
\ee
We differentiate the expression for $f(x,t)$ in equation \qref{eq:sol-formula} to find
\be
\label{eq:sol-formula2}
\partial_t f - \partial_x f = -f_0(t) f_0(x+t) \stackrel{\qref{eq:sol-formula}}{=}  -\frac{f(0,t)f(x,t)}{\rho(t)^2} = -\frac{f(0,t)}{M(t)} f(x,t).
\ee
When $f_0$ has compact support, the solution formula \qref{eq:sol-formula} continues to hold for all $t$,  and  $f(x,t) \equiv 0$ when $t \geq t_*$, where $t_* = \inf \{x \left| \int_x^\infty f_0(r)\,dr =0\right. \}$.  

(b) The main subtlety in defining solutions to \qref{eq:ke} directly with arbitrary $L^1_+$ initial data is that the pointwise boundary value $f(0,t)$ is not defined in general, even if we know that $f(\cdot,t) \in L^1_+(0,\infty)$. However, the solution formula \qref{eq:sol-formula} clearly defines a function in $L^1_+$. Further, since the shift is continuous in $L^1$ with the norm topology, and $\rho(t)$ is continuous, the solution map is continuous in $L^1_+$ with the norm topology. It is the unique extension to $L^1_+$ of the densely-defined solution map of (a).
\end{proof}
In all that follows  we will assume that the normalization \qref{eq:sok-formula-corr} holds.
We will use the following notation for  distribution functions
\be
\label{eq:def-disbn}
F(x,t) = \int_0^x f(y,t) \, dy, \quad F_0(x) = \int_0^x f_0(y) \, dy.
\ee
To fix ideas, it is useful to note the following  solution. When $f_0(x) = \one_{0 < x \leq 1}$ we find that
\be
\label{eq:sol-formula3}
M(t) = (1-t)^2, \quad f(x,t) = (1-t)\one_{0 < x < 1-t}, \quad 0 \leq t \leq 1.
\ee

Finally, let us note that the solution formula \qref{eq:sol-formula} has a formal extension to measure-valued solutions. The distribution function for an $L^1_+$ solution satisfies
\be
\label{eq:sol-formula5}
F(x,t) = \rho(t) \left(F_0(x+t) -F_0(t)\right).
\ee
This formula is meaningful when $F_0$ is an increasing \cadlag\/ function that is not necessarily continuous. However, since $t \mapsto \rho(t)$ is now discontinuous in general, the map $t \mapsto F(\cdot,t)$ does not define a continuous dynamical system. This issue is closely tied to the main well-posedness theorem of~\cite{MNP}. Our main goal in this paper is to establish a hydrodynamic limit theorem via concentration estimates and the continuity of $F_0$ plays a role in the proof. For this reason, we do not consider measure-valued solutions in this paper, though formula \qref{eq:sol-formula5} will be useful.

\subsubsection{The queueing model}
The particle system  is a queueing model defined as follows. Let $\Rorder^m$ denote the set of vectors $\xx \in \Rplus^{m}$ with $m$ strictly ordered coordinates $0< x_1 < x_2 < \ldots < x_m$. 
Each state of the particle system is a vector $x \in \Rorder^m$ for an even, positive integer $m$, and the state space is the disjoint product $E =\coprod_{m \in 2\N }\Rorder^{m}$.  The evolution of the system from an arbitrary point $\xx \in \Rplus^m$ is as follows. For $0 \leq t < \tau:=x_1$, each particle drifts to the left at unit speed,
\be
\label{eq:pdmp2}
x_i(t) = x_i - t, \quad 1 \leq i \leq m, 
\ee
until the left-most particle $x_1$ hits the origin at time $\tau$. At the hitting time, $\tau$, the particle at the origin is removed from the system, along with another particle chosen uniformly. Precisely, an index $j \in \{2,\ldots,m\}$ is chosen uniformly, and the particle $x_j(\tau)=x_j-\tau$ is removed. The vector of size $m-2$ that remains is the new state of the system. If $m \geq 4$, this process of deterministic drift followed by removal of a random particle is repeated. If not, the process terminates. It is intuitively clear that the particle system is well-defined, and it is easy to check that it satisfies the rigorous definition of a piecewise deterministic Markov process proposed by Davis~\cite{Davis}. 

We will fix a convenient initial condition  for the particle system in order to state the concentration estimates for the empirical measure. The conclusions hold in somewhat greater generality, but this family of initial conditions is natural, and allows us to convey the main ideas in a simple fashion.

Assume given an initial probability density $f_0 \in L^1_+$, and an even positive integer $n$, and recall that $F_0(x) = \int_0^x f_0(r) \, dr$ denotes the cumulative distribution function of $f_0$.  Let 
\be
\label{eq:ic}
a_k = F_0^{-1}\left(\frac{2k-1}{2n}\right), \quad 1 \leq k \leq n.
\ee
We assume that the $n$-particle system starts at the state  $\xx(0) =(a_1,a_2, \ldots,a_n)$.
The state of the system becomes random after time $\tau_1=x_1(0)$, when the leftmost particle hits the origin. The hitting times are denoted $\tau_1 < \tau_2 < \ldots < \tau_{n/2}$. The state of the system is a \cadlag\/ path $\xx(t)$ in $E$ which jumps at the times $\tau_k$, $1 \leq k \leq n/2$.  We let $\prob_n$ denote the law of the process $\xx(t)$. When $n$ is fixed, we simply write $\prob$. 

\subsubsection{Concentration estimates}
We keep track of the loss of particles at the origin through the distribution function 
\be
\label{eq:loss-defn}
\lossN(t) = \frac{1}{n} \sum_{i=1}^{n/2} \one_{t \geq \tau_i}.
\ee

Loosely speaking, $\lossN(t)$ is the `internal clock' of the system. The main subtlety in the problem is that the number of jumps before a tagged particle is removed from the system -- either because it hits the origin, or because it is randomly chosen for deletion -- is random. However, we expect that in the $n \to \infty$ limit, the rate of loss will be determined by the boundary value $f(0,t)$. In order to express a law of large numbers for $\lossN$, we define the limiting loss distribution function $L(t)$ for equation \qref{eq:ke} by the conservation law
\be
\label{eq:Ldef}
2L(t) + M(t) = M(0) =1, \quad\mathrm{or}\quad L(t) =  \frac{1}{2}\left( 1- \rho(t)^2 \right). 
\ee
The factor of $2$ reflects the fact that two  particles are lost each time a particle hits the origin. Observe that $L(t)$ is continuous in time, because of our assumption that $f_0 \in L^1_+$. Continuity is used in the proof of the  following uniform concentration estimate for $\lossN$.
\begin{theorem}
\label{thm:conc-L}
For every $\veps>0$ there exists an $n_\veps$ such that for $n\geq n_\veps$
\be
\label{eq:conc-L}
\prob_n\left( \sup_{t \in [0,\infty)} \left| \lossN(t) - L(t) \right| > \frac{\veps}{2} \right) \leq \frac{2}{\veps}  \, e^{-8n\veps^2}.
\ee
The parameter $n_\veps$ is given implicitly by $n_\veps\veps = 4C\log n_\veps$ where $C$ is a universal constant.
\end{theorem}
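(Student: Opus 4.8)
The plan is to establish pointwise concentration of $\lossN(t)$ around $L(t)$ with exponential rates, and then upgrade to the uniform-in-$t$ statement by a standard grid argument exploiting monotonicity of both $\lossN$ and $L$. The key observation is that $\lossN(t) = k/n$ precisely when $\tau_k \le t < \tau_{k+1}$, so controlling $\lossN(t)$ is equivalent to controlling the hitting times $\tau_k$, and $\tau_k$ in turn is a sum of the inter-jump intervals $\tau_{j+1}-\tau_j$. Each such interval is a deterministic function of the system state immediately after the $j$-th jump, namely the (new) leftmost particle position; so everything reduces to understanding, after $k$ removals-at-the-origin (hence $2k$ particles removed in total), which of the original $a_i$'s survive and where the minimum of the survivors sits. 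This is exactly where the urn model and the two-phase sampling estimate from Sections~\ref{sec:urn} and~\ref{sec:thinning} enter.

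First I would set up the reduction: express $\tau_k$ as $a_{I(k)} - (\text{amount already elapsed})$, or more cleanly, observe that at the moment of the $k$-th origin-hit a total of $2k$ of the initial particles have been removed, $k$ by hitting the origin and $k$ by random deletion, and the elapsed time $\tau_k$ equals the initial position of the particle that is $k$-th to reach the origin. I would then show that $\tau_k$ concentrates around the value $t$ determined implicitly by $L(t) = k/n$, i.e. around $F_0^{-1}$ of an appropriate quantile, with the fluctuation controlled by: (i) a concentration estimate for how many of the not-yet-hit particles have been randomly deleted by the time $k$ particles have hit the origin — this is the diminishing-urn count, for which Section~\ref{subsec:urn} gives Gaussian-type tails; and (ii) a concentration estimate for the empirical distribution of the surviving subset of $\{a_1,\dots,a_n\}$ viewed as a without-replacement sample, which is where Maurey's inequality on the permutation group (Section~\ref{sec:thinning}) supplies the $e^{-cn\veps^2}$ bound. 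Composing these, and using that $F_0^{-1}$ transports quantile fluctuations to spatial fluctuations, gives $\prob_n(|\lossN(t) - L(t)| > \veps/4) \le C e^{-cn\veps^2}$ for each fixed $t$, with the constant tracked so that the exponent is of the claimed form.

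To pass from pointwise to uniform control, I would choose a finite mesh $0 = t_0 < t_1 < \dots < t_N$ with $N \approx 1/\veps$ chosen so that $L(t_{j+1}) - L(t_j) < \veps/4$ for all $j$ (possible since $L$ is continuous, bounded, and nondecreasing by Theorem~\ref{thm:well-posedness} via \qref{eq:Ldef}). Since both $\lossN$ and $L$ are nondecreasing, on the interval $[t_j, t_{j+1}]$ we have $\lossN(t) - L(t) \le \lossN(t_{j+1}) - L(t_j) \le (\lossN(t_{j+1}) - L(t_{j+1})) + \veps/4$ and similarly from below, so $\sup_t |\lossN(t) - L(t)| \le \max_j |\lossN(t_j) - L(t_j)| + \veps/4$. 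A union bound over the $N \approx 1/\veps$ mesh points then yields the factor $\tfrac{2}{\veps}$ in front of the exponential, and adjusting $\veps/4 \to \veps/2$ absorbs the extra $\veps/4$ slack; the threshold $n_\veps$ with $n_\veps \veps = 4C \log n_\veps$ is exactly what is needed for the per-point failure probability, after multiplication by $2/\veps$, to stay small and for the $n$-dependent error terms in the urn and sampling estimates to be negligible.

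The main obstacle, I expect, is step (i)–(ii): correctly identifying the joint law of "number of survivors randomly deleted after $k$ origin-hits" together with "which survivors remain," and showing these two sources of randomness can be handled in sequence without the dependence between them destroying the exponential rate. The subtlety flagged in the introduction — that the inter-jump times are deterministic functions of the post-jump state, so the randomness is entirely in *which* particles get deleted — is a help here rather than a hindrance, but one must be careful that the urn count and the identity of the surviving set are being measured against the right reference ($L(t)$ and $F_0$ respectively), and that the error from replacing the discrete initial quantiles $a_k = F_0^{-1}((2k-1)/2n)$ by the continuum is $O(1/n)$ and thus swallowed by $\veps$ once $n \ge n_\veps$.
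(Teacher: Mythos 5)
Your grid argument is exactly the paper's: a mesh with $L(t_{j+1})-L(t_j)<\veps/2$, so that monotonicity of both $\lossN$ and $L$ reduces the supremum to a maximum over $\le 1/\veps$ points, followed by a union bound. That part is right.

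The pointwise step is where you are overcomplicating and, as you stated it, would run into trouble. You propose to control the hitting times $\tau_k$ by combining (i) the urn count with (ii) a Maurey-type estimate for ``which survivors remain,'' and you yourself flag that the joint law of (i) and (ii) is the obstacle. But (ii) is not needed for this theorem, and introducing it creates a dependence problem that the paper simply never encounters. The paper's observation (Section~\ref{subsec:dtoL}) is an \emph{exact distributional identity at fixed $t$}: let $w$ be the largest index with $a_w\le t$ and $r=n-w$. Every particle $a_i$ with $i\le w$ is gone by time $t$ (it drifts to the origin by time $a_i\le t$, unless randomly deleted sooner), and every particle $a_i$ with $i>w$ that is gone by time $t$ was necessarily removed by random deletion, never by hitting the origin. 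Hence $n\lossN(t)$ counts exactly the number of draws in the diminishing-urn process with $w$ white and $r$ red balls, i.e.\ $n\lossN(t)\eqd d_{n,r}$, and the pointwise bound \eqref{eq:pointest} is a one-line corollary of Theorem~\ref{thm:urn}(b) via \eqref{eq:d-translate} and $\max_\rho\psi(\rho)=1/8$. No information about the \emph{positions} of the surviving particles enters at all; that is why the Maurey/thinning estimate (Section~\ref{sec:thinning}) only appears later, for the empirical measure concentration in Theorems~\ref{thm:one-point} and \ref{thm:conc-emp}. Your reparameterization via $\tau_k$ and the $k$-th order statistic of the origin-hitting indices is dual to $\lossN(t)\ge k/n$, so it can be steered back to the same place, but the ``identity of the surviving set'' is a red herring here and the dependence you flag is not something you should have to fight through.
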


\subsubsection{Concentration of the empirical measure}
The cone of positive measures on $\R_+$ is denoted by $\M$. 
The duality pairing between $\mu \in \M$ and a continuous function $\varphi \in C(\R_+)$ is expressed as
\be
\label{eq:def-dual}
\langle \mu, \varphi \rangle = \int_0^\infty \varphi (x) d\mu(x).
\ee
The space $\M$ equipped with the weak-* topology  may be metrized using the space of bounded Lipschitz functions~\cite{Dudley-RAP},
\begin{align}
BL(\mathbb{R}^+) = \{\varphi \in C(\mathbb R^+):\|\varphi\|_{\BL}<\infty\}, \\
\|\varphi\|_{\BL} = \sup_{x} |\varphi(x)| + \sup_{x,y} \frac{|\varphi(x)-\varphi(y)|}{|x-y|}.
\end{align}
The distance between two measures $\mu,\nu \in \M$ in the  BL-metric is 
\begin{equation}
\ddm(\mu, \nu) = \sup_{\|\varphi\|_{\BL} \leq 1} \langle \mu - \nu, \varphi \rangle.
\end{equation}
The empirical measure defined by  each state $\xx(t) \in \R_+^m$, and the empirical measured defined by $\xx(0) =(a_1,a_2,\ldots,a_n)$ are denoted
\be
\label{eq:def-emp}
\empN(t) = \frac{1}{n} \sum_{i=1}^m \delta_{x_i(t)}, \quad \empN_0 = \frac{1}{n}\sum_{i=1}^n \delta_{a_i}.
\ee
Finally, the following notation is convenient.  For each $h >0$, we define the shift operator $S_h$ acting on bounded, measurable functions, and its dual operator $S_h^*$ acting on measures, as follows
\be
\label{eq:shift1}
\left(S_h \varphi\right) (x) = \varphi(x-h) \one_{x \geq h}, \quad \left(S_h^*\right) \mu(x) = \mu(x+h) -\mu(h), \quad x \in (0,\infty).
\ee
(Here and in what follows, we use the same notation for a measure and its \cadlag\/ distribution function, $\mu(x)= \mu([0,x))$ when there is no possibility of confusion).
The solution formula \qref{eq:sol-formula5} may be expressed in terms of the shift map
as $F(\cdot,t)= \rho(t) S_t^*F_0(\cdot)$.

\begin{theorem}
\label{thm:conc-emp}
There exist universal constants $K,\kappa>0$, such that for every $\veps>0$ and $T>0$ there exists $n_\veps$, $M_\veps$, $N_\veps$ such that
\be
\label{eq:conc-eps}
\prob_n\left(  \sup_{t \in [0,T]} \ddm\left(\empN(t), \rho(t)S_t^*F_0\right) > \veps \right) \leq K \left(M_\veps N_\veps + \frac{1}{\veps} \right) e^{-\kappa n \veps^2}.
\ee
\end{theorem}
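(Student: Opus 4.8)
My plan is to move the estimate out of the bounded-Lipschitz metric into a Kolmogorov--Smirnov bound on distribution functions, and then to combine the two mechanisms advertised in the introduction: the diminishing urn of Section~\ref{sec:urn}, which via Theorem~\ref{thm:conc-L} pins down \emph{how many} particles have been lost by time $t$, and Maurey's concentration inequality on the symmetric group (Section~\ref{sec:thinning}), which controls \emph{which} particles the uniform draws have removed. Throughout I use the normalization $M(0)=1$, so $\rho(t)=1-F_0(t)$.

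\textbf{Reduction to distribution functions.} First I reduce to a sup-in-$(t,x)$ bound. Because $f_0\in L^1_+$, for each $\veps$ there is a level $T_\veps$ with $1-F_0(T_\veps)<\veps$; since the $a_k$ are quantiles of $F_0$, the tail masses $\empN(t)([T_\veps,\infty))$ and $(\rho(t)S_t^*F_0)([T_\veps,\infty))$ are both $O(\veps)+O(1/n)$ uniformly for $t\in[0,T]$. For a test function with $\|\varphi\|_\BL\le 1$, splitting the pairing at $x=T_\veps$, bounding the tail by $\|\varphi\|_\infty$ times these masses, and integrating by parts on $[0,T_\veps]$ (where $|\varphi'|\le 1$ a.e.), one gets
\be
\nn
\ddm\big(\empN(t),\,\rho(t)S_t^*F_0\big)\;\le\;(1+T_\veps)\,\sup_{x>0}\big|\empN(t)(x)-\rho(t)S_t^*F_0(x)\big|\;+\;O(\veps).
\ee
So it suffices to control $\prob_n\big(\sup_{t\in[0,T]}\sup_{x>0}|\empN(t)(x)-\rho(t)S_t^*F_0(x)|>c\veps\big)$; note this double supremum already dominates the total--mass defect $2\sup_t|\lossN(t)-L(t)|$, which is exactly where Theorem~\ref{thm:conc-L} enters.

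\textbf{The pointwise estimate.} Fix $t$ and $x$ and put $y=x+t$. Since particles drift at unit speed and never reorder, a particle started at $a_k$ reaches the origin at time exactly $a_k$ unless a uniform draw has removed it earlier; hence the particles alive at time $t$ are precisely the indices $k$ with $a_k>t$ not yet drawn, and
\be
\nn
\empN(t)(x)=\frac1n\#\{k:\,t<a_k<y,\ k\ \text{alive at}\ t\}=\frac1n\#\{k:\,t<a_k<y\}-\frac1n\#\{k:\,t<a_k<y,\ k\ \text{drawn before}\ t\}.
\ee
The first term is $F_0(y)-F_0(t)+O(1/n)$, deterministically (quantiles again). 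On the good event of Theorem~\ref{thm:conc-L} the number of draws up to time $t$ equals $n\lossN(t)=nL(t)+O(n\veps)$, so the number of drawn indices inside $\{k:a_k>t\}$ is pinned down to $O(n\veps)$; conditionally on the loss trajectory their identities can be generated from a single uniform random permutation of $\{1,\dots,n\}$. The combinatorics of the removal-driven thinning (the content of Section~\ref{sec:conpoint}) identifies the conditional mean of $\frac1n\#\{k:\,t<a_k<y,\ k\ \text{drawn before}\ t\}$ as $(1-\rho(t))\big(F_0(y)-F_0(t)\big)+O(\veps)$, and, since swapping two entries of the permutation changes $\empN(t)(x)$ by at most $O(1/n)$, Maurey's inequality gives $\prob_n\big(|\empN(t)(x)-\E[\empN(t)(x)\mid\text{loss trajectory}]|>s\big)\le 2e^{-\kappa ns^2}$. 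Combining, $\empN(t)(x)=\rho(t)\big(F_0(y)-F_0(t)\big)+O(\veps)=\rho(t)S_t^*F_0(x)+O(\veps)$ off an event of probability at most $2e^{-\kappa n\veps^2}$ plus the failure probability of Theorem~\ref{thm:conc-L}. I expect this to be the crux of the whole argument: one must choose the coupling with the permutation group so that the empirical-measure functional is genuinely $O(1/n)$-Lipschitz under transpositions \emph{and} so that its conditional mean can be computed, i.e.\ one must show that the thinning acts, in conditional expectation, precisely as multiplication by $\rho(t)$ on the left-shifted empirical measure.

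\textbf{From pointwise to uniform.} Finally I would discretize. Both $x\mapsto\empN(t)(x)$ and $x\mapsto\rho(t)S_t^*F_0(x)$ are nondecreasing; between consecutive jump times the empirical measure evolves by pure leftward drift, so $\empN(t)(x)=\empN(\tau_j)\big(x+(t-\tau_j)\big)$ on $[\tau_j,\tau_{j+1})$; and $F_0$ is uniformly continuous. Choosing a $t$-grid $0=t_0<\dots<t_{N_\veps}=T$ and an $x$-grid $0<x_1<\dots<x_{M_\veps}=T_\veps$ so that $F_0$ increases by at most $\veps$ across consecutive nodes — whence $M_\veps,N_\veps=O(1/\veps)$, independent of $n$ — one checks that on the good event of Theorem~\ref{thm:conc-L} the loss in mass of $\empN$ over $[t_i,t_{i+1}]$ is $O(\veps)$, and then a short sandwiching argument (shifting the argument by $t-t_i$, using monotonicity in $x$ and the $\veps$-step of $F_0$) bounds $\sup_{t,x}|\empN(t)(x)-\rho(t)S_t^*F_0(x)|$ by the maximum of the pointwise deviations over the $M_\veps N_\veps$ nodes, up to an additive $O(\veps)$. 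A union bound over these nodes, each contributing $2e^{-\kappa n\veps^2}$, together with the single term $\frac{2}{\veps}e^{-8n\veps^2}$ from Theorem~\ref{thm:conc-L}, yields \qref{eq:conc-eps} after adjusting the universal constants $K$ and $\kappa$.
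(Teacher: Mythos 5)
Your plan shares the paper's essential ingredients---the diminishing-urn estimate (Theorem~\ref{thm:conc-L}) to control the number of removed particles, Maurey's inequality to control which ones, and a discretization to pass from pointwise to uniform concentration---and the pointwise mechanism (conditioning on the loss data, exchangeability of the red labels, conditional expectation $(1-\rho(t))(F_0(y)-F_0(t))$, $O(1/n)$ sensitivity under transpositions) is correct and matches in spirit the decomposition the paper makes in Theorem~\ref{thm:one-point} (conditioning on $X_{n,r}$ and viewing $\empN(t)$ as a uniform thinning of $S_t^*\empN_0$). The main structural difference is your reduction to the Kolmogorov--Smirnov distance. The paper instead stays in the bounded-Lipschitz metric and discretizes the \emph{test-function} side: Corollary~\ref{cor:thin} covers the unit ball of $\BL([0,x_*])$ by finitely many functions, and $M_\veps$ is that covering number (Kolmogorov--Tikhomirov, so roughly $e^{Cx_*/\veps}$). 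Your version replaces this with an $x$-grid of size $O(1/\veps)$, which is a genuinely leaner union bound.

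However, the step ``$\ddm\le (1+T_\veps)\sup_x|\empN(t)(x)-\rho(t)S_t^*F_0(x)|+O(\veps)$'' introduces a gap relative to the theorem as stated. To force $\ddm\le\veps$ you must demand the KS deviation be $\lesssim\veps/(1+T_\veps)$, and plugging this into Maurey gives an exponent proportional to $n\veps^2/(1+T_\veps)^2$. Since $T_\veps$ depends on the tail of $F_0$ (and in the heavy-tailed case diverges as $\veps\to0$), the resulting rate $\kappa$ is \emph{not} universal, contradicting the quantifier structure ``$\exists\,K,\kappa\;\forall\,\veps,T,f_0$.'' In the paper's proof the $F_0$-dependence is pushed entirely into the prefactor $M_\veps$, while the exponential rate stays at a fixed numerical constant ($1/256$ in Theorem~\ref{thm:one-point}, $8$ in Theorem~\ref{thm:conc-L}). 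So either you must pay an enormous (but $n$-independent) prefactor and keep $\kappa$ universal, as the paper does, or you must state the weaker conclusion with $\kappa=\kappa(F_0,\veps)$. As written, your proof establishes a valid concentration bound, but not with the universal $\kappa$ claimed in \qref{eq:conc-eps}. A minor secondary point: you should phrase the Maurey step as a permutation of the $r$ red labels (the $S_r/(S_s\times S_{r-s})$ coset space of Section~\ref{sec:thinning}), not of $\{1,\dots,n\}$, and should justify that conditioning on the full loss trajectory (rather than just on $X_{n,r}$, as the paper does) still yields a uniform thinned subset; this does follow from exchangeability of the red labels, but it deserves a sentence.
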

Let $\mathbb{Q}$ denote the product measure $\prod_{n=2}^\infty \prob_n$. By the Borel-Cantelli lemma, we obtain a strong law of large numbers.
\begin{corollary}
\label{cor:SLLN}
For every $T>0$,
\be
\lim_{n \to \infty} \sup_{t \in [0,T]} \ddm\left(\empN(t), \rho(t)S_t^*F_0\right)       =0, \quad \mathbb{Q} \; a.s.
\ee
\end{corollary}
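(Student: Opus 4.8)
The plan is to read off Corollary~\ref{cor:SLLN} from Theorem~\ref{thm:conc-emp} using only the first (``convergence'') half of the Borel--Cantelli lemma, followed by a diagonal argument to let $\veps \downarrow 0$. Fix $T>0$ and, for $\veps>0$ and $n$ even, set
\[
A_n^\veps \;=\; \Big\{ \sup_{t \in [0,T]} \ddm\big(\empN(t),\, \rho(t) S_t^* F_0\big) > \veps \Big\}.
\]
First I would note that $A_n^\veps$ is a genuine event: along any realization the path $\xx(t)$ has at most $n/2$ jumps, so on $[0,T]$ the empirical measure $\empN(t)$ is a finite concatenation of pieces evolving by translation, and $t \mapsto \ddm(\empN(t),\rho(t)S_t^* F_0)$ is piecewise continuous with discontinuities only at the $\tau_k$; hence its supremum over $[0,T]$ is attained along a countable set and is measurable. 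Theorem~\ref{thm:conc-emp} then yields, for $n$ at least the threshold appearing there, the bound $\prob_n(A_n^\veps) \le K(M_\veps N_\veps + \tfrac1\veps)\, e^{-\kappa n \veps^2} =: C_\veps\, e^{-\kappa n \veps^2}$, where the crucial point is that the prefactor $C_\veps$ depends only on $\veps$ and $T$, not on $n$.

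Next, for each fixed $\veps$ one has $\sum_{n} \mathbb{Q}(A_n^\veps) = \sum_{n} \prob_n(A_n^\veps) \le C_\veps \sum_{n} e^{-\kappa n \veps^2} < \infty$, the finitely many terms below the threshold being harmless. Since we use only the easy direction of Borel--Cantelli, no independence of the coordinates under $\mathbb{Q}$ is needed. Hence $\mathbb{Q}(\limsup_n A_n^\veps)=0$, i.e.\ for $\mathbb{Q}$-almost every $\omega$ there is $n_0(\omega,\veps)$ with $\sup_{t \in [0,T]} \ddm(\empN(t),\rho(t)S_t^* F_0) \le \veps$ for all $n \ge n_0$. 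Finally, apply this with $\veps = 1/j$, $j \in \N$, and intersect the resulting $\mathbb{Q}$-full-measure events over $j$ (a countable intersection of full-measure sets is full measure): on the intersection, $\limsup_{n\to\infty}\sup_{t\in[0,T]} \ddm(\empN(t),\rho(t)S_t^* F_0) \le 1/j$ for every $j$, so the $\limsup$ is zero. If one wishes the statement simultaneously for all $T$, take $T = T_m \uparrow \infty$ and intersect once more.

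There is essentially no obstacle here, since all the analytic content is packaged in Theorem~\ref{thm:conc-emp}; the only two points requiring a word are the $n$-independence of the prefactor in~\qref{eq:conc-eps} (which is exactly what makes the tail bound summable in $n$ for fixed $\veps$) and the measurability of $A_n^\veps$, both of which are immediate from the structure already in hand.
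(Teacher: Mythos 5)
Your proposal is correct and is exactly the argument the paper intends; the paper's proof is literally the single sentence ``By the Borel--Cantelli lemma, we obtain a strong law of large numbers,'' and your write-up fills in the obvious details (the $n$-independence of the prefactor in~\qref{eq:conc-eps}, summability of the exponential tail, the easy direction of Borel--Cantelli for the product measure $\mathbb{Q}$, and the countable intersection over $\veps=1/j$). There is no substantive difference in approach.
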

The $\veps$ dependence on the parameters in the theorem is as follows. First, $n_\veps$ must be chosen so that the condition of Theorem~\ref{thm:conc-L} holds, and so that the distance $\ddm(F_0,\empN_0)$ between the initial empirical measure, $\empN_0$, and the data, $F_0$, is  $O(\veps)$ for $n \geq n_\veps$. The parameter $M_\veps$ depends on the tail of the initial data $F_0$, but not on $T$. Given $\veps>0$, let $x_*$ be chosen so that $1-F_0(x) < \veps$ for $x > x_*$. The space of bounded Lipschitz functions on $[0,x_*]$ is totally bounded, and $M_\veps$ is the smallest number of $\veps$-balls required to cover the space $\BL([0,x_*])$. This number may be estimated using the Kolmogorov-Tikhomirov estimate~\cite{Kolmogorov}. The parameter $N_\veps$ is related to the modulus of continuity of $F_0$ and depends on $T$. Given $\veps>0$, let $h$ be chosen so that $\sup_{x} F_0(x+h)-F_0(x) < \veps$. Then $N_\veps=T/h$.

\subsubsection{Outline of the paper}
We establish Theorem~\ref{thm:conc-L} by first studying the combinatorics of a `diminishing urn' model in Section~\ref{sec:urn}. Once Theorem~\ref{thm:conc-L} has been established, we introduce a second simplified model -- uniform thinning of a finite point process -- and establish a concentration inequality for this process  using  Maurey's concentration inequality for the permutation group. We then combine these estimates with some simple regularity estimates for the empirical measure, $\empN(t)$, to establish Theorem~\ref{thm:conc-emp}.

\section{The concentration estimate for $\lossN(t)$} 
\label{sec:urn}
Recall that the initial data for the particle system is the state $\xx=(a_1,\ldots,a_N)$ defined in equation \qref{eq:ic}. Given $t>0$, suppose $w$ is the largest integer such that $a_w \leq  t$. Since all particles move to the left at unit velocity, the particles $a_1,\ldots, a_w$ are all removed from the system by time $t$. However, these particles could be removed either because they hit the origin, or because they were randomly selected. The loss measure $n\lossN(t)$ counts only the particles that hit the origin. Thus, in order to estimate it, we must distinguish between the two possibilities for removing particles. The combinatorics of this process does not depend on the spatial arrangement of the points.  In fact, a closely related process appeared as a model of canibbalistic behavior in a population, and was analyzed by Pittel~\cite{Pittel}. We follow his work in the next subsection.

\subsection{The diminishing urn}
\label{subsec:urn}
Let $r \leq n$ be positive integers. Consider an urn with $w$ white balls and $r =n- w$ red balls. Balls are removed randomly, with a draw occurring in the following way. First, a white ball is removed from the urn.  Next, another ball is chosen randomly from the remaining balls. This process ends when all the white balls have been removed.~\footnote{In  Pittel's model either  one white ball is  removed, or two  white balls are removed and  one red ball is added.} Our interest lies in the quantity $d_{n,r}$, the total number of draws, and $X_{n,r}$, the terminal number of red balls. We will prove results about $X_{n,r}$. These are equivalent to results about $d_{n,r}$. Indeed, given $X_{n,r}$, the total number of balls removed from the urn is $n-X_{n,r}$, and since two balls are removed at each  draw, the total number of draws is 
\begin{equation}
\label{eq:d-translate}	
d_{n,r} = \frac{n-X_{n,r}}2.
\end{equation}

In order to state the limiting law for $X_{n,r}$ we define the functions
\be
\label{eq:def-phi}
\phi(x) = x^2,\quad\mathrm{and}\quad \psi(x) = 2x^2(1-x)^2, \quad x \in [0,1].
\ee
\begin{theorem}
\label{thm:urn}	
(a) Assume $\lim_{n \to \infty} r/n=\rho \in (0,1)$. Then the random variables
\be
\frac{X_{n,r} - n\phi(\rho)}{(n\psi(\rho))^{\frac {1}{2}}} \quad\mathrm{and}\quad
\frac{2d_{n,r} - n(1-\phi(\rho))}{(n\psi(\rho))^{\frac {1}{2}}}
\ee
converge in distribution to the standard normal law. 

(b) For every $\veps>0$ there exists $n_\veps>0$ such that for all positive integers $n$ and $r$ with $r/n=\rho \in (0,1)$ and $n\geq n_\veps$
\ba
\label{eq:conc-X}
&& \mathbb{P}\left( \left|\frac{X_{n,r}}n-\phi(\rho) \right |>\veps \right ) \leq 2\exp
\left (-\frac{n \veps^2}{4\psi(\rho)} \right), \\
\label{eq:dnr}
&& \mathbb P\left(\left|\frac{d_{n,r}}{n} - \frac{1}{2}\left(1-\phi(\rho)\right) \right|> \veps\right) \leq 2\exp\left(-\frac{n\veps^2}{\psi(\rho)} \right). 
\ea
The parameter $n_\veps$ is given implicitly by $n_\veps \veps^2 = 4C \veps \log n_\veps$ where $C>0$ is a universal constant.
\end{theorem}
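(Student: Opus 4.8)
\section*{Proof proposal for Theorem~\ref{thm:urn}}

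The plan is to follow Pittel and study the process through the moment generating function $\Phi_{n,r}(\lambda)=\E[e^{\lambda X_{n,r}}]$. Conditioning on the first draw --- after the forced removal of a white ball, $n-1$ balls remain, of which $n-r-1$ are white, so the uniformly chosen second ball is white with probability $(n-r-1)/(n-1)$ and red with probability $r/(n-1)$ --- gives the recursion
\[
\Phi_{n,r}(\lambda)=\frac{n-r-1}{n-1}\,\Phi_{n-2,r}(\lambda)+\frac{r}{n-1}\,\Phi_{n-2,r-1}(\lambda),\qquad n-r\ge 2,
\]
with base cases $\Phi_{n,n}(\lambda)=e^{\lambda n}$ and $\Phi_{n,n-1}(\lambda)=e^{\lambda(n-1)}$. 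Since $d_{n,r}=(n-X_{n,r})/2$ by \qref{eq:d-translate}, one has $2d_{n,r}-n(1-\phi(\rho))=-(X_{n,r}-n\phi(\rho))$ and $\{|d_{n,r}/n-\tfrac12(1-\phi(\rho))|>\veps\}=\{|X_{n,r}/n-\phi(\rho)|>2\veps\}$; thus it suffices to prove both assertions for $X_{n,r}$, after which \qref{eq:dnr} follows from \qref{eq:conc-X} by the substitution $\veps\mapsto2\veps$ and the $d_{n,r}$ central limit theorem from the symmetry of the Gaussian.

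I would first extract the fluid profile. Writing $x=r/n$, a direct expansion shows that $n\phi(x)=nx^{2}$ is harmonic for the recursion up to $O(1/n)$, namely $\frac{n-r-1}{n-1}(n-2)\phi(\tfrac{r}{n-2})+\frac{r}{n-1}(n-2)\phi(\tfrac{r-1}{n-2})=n\phi(x)+\tfrac{r(n-2r)}{n(n-1)(n-2)}$, so $\E[X_{n,r}]=n\phi(x)+O(\log n)$. Substituting $\Phi_{n,r}(\lambda)=e^{\lambda n\phi(r/n)}M_{n,r}(\lambda)$ turns the recursion into
\[
M_{n,r}(\lambda)=\frac{n-r-1}{n-1}\,e^{\lambda\alpha_{n,r}}M_{n-2,r}(\lambda)+\frac{r}{n-1}\,e^{\lambda\beta_{n,r}}M_{n-2,r-1}(\lambda),
\]
with $\alpha_{n,r}=(n-2)\phi(\tfrac{r}{n-2})-n\phi(\tfrac{r}{n})\to2x^{2}$ and $\beta_{n,r}=(n-2)\phi(\tfrac{r-1}{n-2})-n\phi(\tfrac{r}{n})\to-2x(1-x)$; the weighted mean $(1-x)\alpha+x\beta$ vanishes to leading order while the weighted second moment is $(1-x)\alpha^{2}+x\beta^{2}\to4x^{3}(1-x)$. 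The key algebraic fact, which I would verify directly from $\psi(x)=2x^{2}(1-x)^{2}$, is the identity
\[
2\psi(x)-x\psi'(x)=4x^{3}(1-x),
\]
which says that along one step of the recursion $n\psi(r/n)$ decreases in conditional mean by exactly the per-step variance $4x^{3}(1-x)$, so that $n\psi(r/n)$ is precisely the remaining-variance functional. Iterating gives $\log M_{n,r}(\lambda)=\tfrac12\lambda^{2}n\psi(r/n)+o(n\lambda^{2})$ as $\lambda\to0$; after the scaling $\lambda=\mu/(n\psi(\rho))^{1/2}$ this is convergence of the moment generating function of $(X_{n,r}-n\phi(\rho))/(n\psi(\rho))^{1/2}$ to $e^{\mu^{2}/2}$ on compact $\mu$-sets, and Curtiss's theorem yields part~(a).

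For part~(b) I would replace this expansion by a one-sided sub-Gaussian bound proved by induction on $n$ down the recursion: there is an absolute $\delta_{0}>0$ such that for $|\lambda|\le\delta_{0}$ and $n\ge n_\veps$,
\[
\E\!\left[e^{\lambda(X_{n,r}-n\phi(r/n))}\right]\le e^{\lambda^{2}n\psi(r/n)}.
\]
The inductive step combines the convexity estimate $(1-x)e^{\lambda\alpha}+xe^{\lambda\beta}\le\exp\!\big(\lambda((1-x)\alpha+x\beta)+\lambda^{2}((1-x)\alpha^{2}+x\beta^{2})\big)$ (valid for $|\lambda|$ small, using that the first-order term is $O(1/n)$) with the identity above: the room on the right-hand side per step, $\lambda^{2}\big(n\psi(r/n)-(n-2)\psi(\cdot)\big)=4\lambda^{2}x^{3}(1-x)+O(\lambda^{2}/n)$ in conditional mean, is twice the variance $2\lambda^{2}x^{3}(1-x)$ that is actually generated, and the factor-of-two slack absorbs the cubic correction and the $O(1/n)$ discrepancies. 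Given the bound, Chernoff's inequality gives $\prob(X_{n,r}-n\phi(\rho)>n\veps)\le\exp(-\lambda n\veps+\lambda^{2}n\psi(\rho))$, which optimized at $\lambda=\veps/(2\psi(\rho))$ --- in $[-\delta_{0},\delta_{0}]$ once $\veps$ is small --- gives $\exp(-n\veps^{2}/(4\psi(\rho)))$; the lower tail is symmetric, producing the factor $2$ of \qref{eq:conc-X}, and \qref{eq:dnr} follows as above.

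The step I expect to be the main obstacle is this error control. The recursion runs for $\Theta(n)$ steps, and one must show that the per-step $O(1/n)$ gaps between the exact recursion and its fluid/Gaussian surrogate do not spoil the CLT normalization (an $o(\sqrt n)$ error in the mean, $o(1)$ in the log--MGF) or the exponential rate. This is most delicate in the terminal regime $r/n\to1$, equivalently $w=n-r$ small, where $\psi(r/n)\to0$ and the fluid approximation degrades: one must cut the recursion off when $w$ is of order $\log n$ and bound the tail by a crude estimate, which is harmless because that tail contributes only $O(\log n)$ draws, hence $O(\log n)$ to the deviation --- precisely the origin of the constraint $n_\veps\veps=4C\log n_\veps$. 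Finally, the constants degenerate as $\rho\to0$ or $1$ because $\psi(\rho)\to0$, which is why the endpoints are excluded.
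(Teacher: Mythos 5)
Your proposal follows essentially the same route as the paper: condition on the first draw to get the MGF recursion, postulate a Gaussian ansatz, identify $\phi(\rho)=\rho^2$ and $\psi(\rho)=2\rho^2(1-\rho)^2$ from the first- and second-order cancellations, and get the concentration inequality by Chernoff. The identity $2\psi(x)-x\psi'(x)=4x^3(1-x)$ that you single out is precisely (up to sign) the ODE that the paper extracts from the coefficient of $z^2$, and your ``remaining-variance functional'' reading of it is a correct and useful way to see why the ansatz works. The translation $d_{n,r}=(n-X_{n,r})/2$ and the passage from \qref{eq:conc-X} to \qref{eq:dnr} via $\veps\mapsto 2\veps$ is also exactly the paper's.

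The one place where the proposal as written is wrong is the clean sub-Gaussian lemma $\E[e^{\lambda(X_{n,r}-n\phi(r/n))}]\le e^{\lambda^2 n\psi(r/n)}$ for all $|\lambda|\le\delta_0$ and $n\ge n_\veps$. This fails already at the terminal base case $r=n-1$: there $X_{n,n-1}$ is deterministic, $X_{n,n-1}-n\phi\bigl(\tfrac{n-1}{n}\bigr)=1-\tfrac1n$, while $n\psi\bigl(\tfrac{n-1}{n}\bigr)=\tfrac2n+O(n^{-2})$, so the left side is $\approx e^{\lambda}$ and the right side is $\approx e^{2\lambda^2/n}\to 1$, and the inequality is false for any fixed small $\lambda>0$. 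The ``factor-of-two slack'' does absorb the per-step $O(\lambda^2/m)$ corrections --- those sum to something bounded, which is small compared with $\lambda^2 n\psi(\rho)$ for fixed $\rho\in(0,1)$ and large $n$ --- but the accumulated harmonic/terminal error is $O(\lambda\log n)$, \emph{linear} in $\lambda$, and a linear error cannot be swallowed by a quadratic slack uniformly over $|\lambda|\le\delta_0$; it can only be swallowed when $\lambda\gtrsim\log n/(n\psi(\rho))$. This is exactly why the paper's Lemma~\ref{fglem} keeps the multiplicative factor $\exp(O(|z|\log m))$, and why the constraint on $n_\veps$ takes the form it does: at the Chernoff-optimal $\lambda=\veps/(2\psi(\rho))$ the error is subdominant precisely when $n\veps\gtrsim\log n$. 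You clearly see this in your final paragraph (the cutoff at $w=O(\log n)$ and the origin of $n_\veps\veps=4C\log n_\veps$), but the intermediate claim has to be stated with the $O(\lambda\log n)$ term present, e.g. $\E[e^{\lambda(X_{n,r}-n\phi(\rho))}]\le\exp\bigl(\tfrac{\lambda^2}{2}n\psi(\rho)+C|\lambda|\log n\bigr)$, after which your Chernoff optimization recovers $\exp(-n\veps^2/(4\psi(\rho)))$ for $n\ge n_\veps$, in agreement with the paper.
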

The theorem is proved by computing the asymptotics of the Laplace transform of the law of $X_{n,r}$, given by
\begin{equation}
\label{eq:LT}
f_{n,r}(z) = \mathbb{E}\left[\exp(zX_{n,r})\right], \quad -\infty<z<\infty.
\end{equation}  
If the number of white ball is zero or one, we find
\begin{align}
f_{n,n}(z) = e^{nz}, \quad n\ge 1,\\
f_{n,n-1}(z) = e^{(n-1)z} \quad n\ge 2.
\end{align}
In the general case, we use the Markov property of $X_{n,r}$ to obtain the recurrence relation
\begin{equation}\label{eq:recrel}
f_{n,r}(z) = \begin{cases}(1-\frac r{n-1})f_{n-2,r}+\frac r{n-1}f_{n-2, r-1} & 0 \le r \le n-2,  \\
f_{n-1,r-1} & r = n-1. \\
\end{cases}
\end{equation}
This relation can be expressed compactly in terms of a linear operator 
\begin{equation}
\label{eq:rec-op}	
 f_{n,r}(z) = T_{nr}\left[f_{n-2,\cdot}(z)\right], \quad 0\le r\le n-1, \quad n\ge 2.
\end{equation}
We will show below that the leading order asymptotics of $f_{n,r}(z)$ as $n \to \infty$ is given by the Laplace transform of a normal random variable
\begin{equation}
\label{eq:g-ansatz}	
g_{n,r}(z) = \exp\left(zn\phi(\rho) +\frac{z^2}2n\psi(\rho)\right), \quad \rho = \frac{r}{n}.
\end{equation}
We assume for now that $\phi$ and $\psi$ are unknown -- equation \qref{eq:def-phi} follows  from substituting the ansatz \qref{eq:g-ansatz} in \qref{eq:recrel} and evaluating the leading order terms. To this end, observe that 
\begin{equation}
g_{n-2,r}(z) = \exp\left( z(n-2)\phi\left(\frac r{n-2}\right)+\frac{z^2}2(n-2)\psi\left(\frac r{n-2}\right)\right).
\end{equation}
Therefore, by elementary algebra 
\begin{align}\label{lone}
L_1:= \log \left( \frac{g_{n-2,r}(z)}{g_{n,r}(z)}\right) = zn\left[\phi\left(\frac r{n-2}\right)-\phi\left(\frac r{n}\right)\right]-2z\phi\left(\frac r{n-2}\right) \\ +\frac{z^2}2n\left[\psi\left(\frac r{n-2}\right)-\psi\left(\frac r{n}\right)\right]-\frac{z^2}2\cdot2\psi\left(\frac r{n-2}\right). 
\end{align}
Similarly,
\begin{align}\label{ltwo}
L_2: = \log \left( \frac{g_{n-2,r-1}(z)}{g_{n,r}(z)}\right) = zn\left[\frac{n-2}{n}\phi\left(\frac {r-1}{n-2}\right)-\phi\left(\frac r{n}\right)\right] \\ +\frac{z^{2}}2n\left[\frac{n-2}{n}\psi\left(\frac {r-1}{n-2}\right)-\psi\left(\frac r{n}\right)\right]  
\end{align}
The first-order asymptotics of the ratios in the arguments of $\phi$ and $\psi$ are clearly
\be
\frac{r}{n-2} = \rho\left(1+\frac 2n \right )+ O\left(\frac 1{n^2}\right), \quad \frac{r-1}{n-2}=\rho+\frac {2\rho -1}n +  O\left(\frac 1{n^2}\right).
\ee
We use the above expressions, the arguments of $\phi$, and Taylor's theorem to obtain
\ba
\label{eq:err-phi1}
\phi\left(\frac{r}{n-2}\right) &=&  \phi(\rho)+\frac 2n \rho \phi'(\rho)+  O\left(\frac 1{n^2}\right), \\
\phi\left(\frac{r-1}{n-2}\right) &=& \phi(\rho)+\frac {2\rho-1}n  \phi'(\rho)+O\left(\frac 1{n^2}\right).
\ea
More precisely, the error terms above satisfy 
\be
\label{eq:err-control}
\left| O\left(\frac 1{n^2}\right) \right| \leq \frac{C}{n^2} \max_{\rho \in (0,1)} |\phi''(\rho)|.
\ee
Similar expansions for $\psi(x)$ may be applied to (\ref{lone}) and (\ref{ltwo}), with the  error terms again dominated by $\max_{\rho \in (0,1)}|\psi''(\rho)|$. We substitute the expansion for $\phi$ and $\psi$ into equations \qref{lone}  to obtain
\be
\nn
L_1=  z(2\rho\phi'(\rho)-2\rho)+ \frac{z^2}2(2\rho\psi'(\rho)+2\rho)+O\left(\frac {|z|}{n}\right)
= A z+C\frac{z^2}2+ O\left(\frac {|z|}{n}\right).
\ee
Similarly,
\begin{align}
\nn
L_2 = z[(2\rho-1)(\phi'(\rho)-2\phi(\rho))]+\frac{z^2}2[(2\rho-1)(\psi'(\rho)-2\psi(\rho))]+O\left(\frac {|z|}{n}\right)\\
\nn
:= Bz +D\frac{z^2}2+O\left(\frac {|z|}{n}\right).
\end{align}
We use the above expressions and equations \qref{lone} and \qref{ltwo} to find
\begin{align}
 \frac{g_{n-2,r}(z)}{g_{n,r}(z)} = 1+A z+(A^{2}+C)\frac{z^2}2+ O\left(\frac {|z|}{n}+|z|^3\right),\\
\frac{g_{n-2,r-1}(z)}{g_{n,r}(z)} = 1+B z+(B^{2}+D)\frac{z^2}2+ O\left(\frac {|z|}{n}+|z|^3\right).
\end{align}
We now use the recurrence relation (\ref{eq:recrel}) to obtain
\ba
\nn
\lefteqn{\frac{T_{n,r}[g_{m-2,\cdot}](z)}{g_{m,r}(z)}= 1+z\left[\rho B+(1-\rho)A\right]} \\
\nn
&&
+\frac{z^2}2[\rho(B^2+D)+(1-\rho)(A^2+C)]+O\left(\frac {|z|}{n} + |z|^3\right).
\ea
The coefficient of $z$ vanishes if the following differential equation holds,
\begin{equation}
0 = \rho B+(1-\rho)A =\rho\phi'(\rho)-2\phi(\rho), \quad \phi(1) = 1.\\
\end{equation}
(The initial condition is determined by the extreme case when $r=n$). We thus find $\phi(\rho) = \rho^2$ as in \qref{eq:def-phi}. Similarly, the coefficient of $z^2$ vanishes if the following differential equation is satisfied,
\be
0 =\rho(B^2+D)+(1-\rho)(A^2+C) =  \rho \psi'(\rho)-2\psi(\rho)+4\rho^3(1-\rho).
\ee
Again the condition $\psi(1)=0$ follows from the extreme case when $r=n$. By direct solution, or inspection, we see that this equation also has the polynomial solution $\psi(\rho) = 2\rho^2(1-\rho)^2$. This establishes \qref{eq:def-phi}.

Since $\max_{\rho \in [0,1]} |\phi''(\rho)|$ and $\max_{\rho \in [0,1]} |\psi''(\rho)|$ are bounded by universal constants, the error terms are uniformly controlled if the domain of $z$ is suitably restricted. We state these results as in ~\cite[Lemma 1]{Pittel} 
\begin{lemma}\label{glem}
Fix $u>0$ and consider $z$ such that $|z|\sqrt{n} \leq u$. Then uniformly over $0 \leq 2 \leq m \leq n$ and $0 \leq r \leq m$,
\begin{equation}
T_{m,r}[g_{m-2,\cdot}(z)] = g_{m,r}(z) \exp(O(|z|m^{-1})).
\end{equation}
\end{lemma}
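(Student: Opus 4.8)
The plan is to turn the formal asymptotic expansion that was just carried out into a rigorous uniform estimate by tracking the error terms carefully. Recall that the quantities $L_1$ and $L_2$ were computed by substituting the exact first-order ratios $r/(n-2)$ and $(r-1)/(n-2)$ into $\phi$ and $\psi$ and applying Taylor's theorem. Since $\phi(x)=x^2$ and $\psi(x)=2x^2(1-x)^2$ are now \emph{known} polynomials, all remainder terms in these Taylor expansions are controlled by $\max_{\rho\in[0,1]}|\phi''(\rho)|$ and $\max_{\rho\in[0,1]}|\psi''(\rho)|$ (respectively by $\phi'''$, $\psi'''$, but those are also absolute constants), as recorded in \qref{eq:err-control}. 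So the first step is to make the bookkeeping precise: write
\begin{equation}
\nn
L_1 = A z + C\frac{z^2}{2} + R_1, \qquad L_2 = B z + D\frac{z^2}{2} + R_2,
\end{equation}
where, since $\phi,\psi$ are polynomials of degree $\le 4$ with coefficients $O(1)$, one has $|R_i| \le C_0 (|z| + z^2)/n$ for $|z|\le 1$ and an absolute constant $C_0$. The constant $A$ (and $B,C,D$) are now explicit: because $\rho\phi'(\rho)-2\phi(\rho)=0$ and $\rho\psi'(\rho)-2\psi(\rho)+4\rho^3(1-\rho)=0$, the linear-in-$z$ coefficient of the combination $\rho B + (1-\rho)A$ vanishes identically and the $z^2$ coefficient $\rho(B^2+D)+(1-\rho)(A^2+C)$ vanishes identically, \emph{exactly}, not just to leading order.

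Next I would exponentiate. The restriction $|z|\sqrt n \le u$ forces $|z|\le u/\sqrt n \le 1$ for $n\ge u^2$; on this range $z^2 \le |z| u/\sqrt n$, so the $z^2$ contributions to the remainders are themselves $O(|z| m^{-1})$ up to the constant $u$. Writing $g_{m-2,r}/g_{m,r} = e^{L_1}$ and $g_{m-2,r-1}/g_{m,r}=e^{L_2}$, I expand the exponential: $e^{L_i} = 1 + (\text{linear in }z) + (z^2\text{ term}) + O(|z|^3) + O(|z|m^{-1})$, where the $O(|z|^3)$ comes from the cubic tail of the exponential series and is $\le |z|^3 e^{|L_i|} \le C|z|^3 \le C u^2 |z|/m$ on the range $|z|\le u/\sqrt m$. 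Forming the convex combination dictated by $T_{m,r}$ — namely $(1-\tfrac{r}{m-1}) e^{L_1} + \tfrac{r}{m-1} e^{L_2}$ — and using that $r/(m-1) = \rho + O(1/m)$, the coefficient of $z^0$ is $1$, the coefficient of $z^1$ is $\rho B + (1-\rho)A + O(|z|/m) = O(|z|/m)$, and the coefficient of $z^2/2$ is $\rho(B^2+D)+(1-\rho)(A^2+C) + O(|z|/m) = O(|z|/m)$. Hence
\begin{equation}
\nn
\frac{T_{m,r}[g_{m-2,\cdot}(z)]}{g_{m,r}(z)} = 1 + O\!\left(\frac{|z|}{m}\right),
\end{equation}
uniformly over $0\le r\le m$ and $2\le m\le n$, where the implied constant depends only on $u$ and the absolute bounds on the derivatives of $\phi,\psi$. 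Finally, taking logarithms of both sides (legitimate since the right side is $1+o(1)$ for $n$ large, and for small $m$ one adjusts the constant) gives $T_{m,r}[g_{m-2,\cdot}(z)] = g_{m,r}(z)\exp(O(|z|m^{-1}))$, which is the statement.

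I expect the main obstacle to be purely a matter of discipline rather than depth: ensuring that every $O(\cdot)$ that appeared in the formal computation really is dominated by a single absolute constant times $|z|/m$ uniformly in $r$, including at the ``edge'' cases $\rho$ near $0$ or $1$ and $m$ small. The one genuine subtlety is the $r=n-1$ branch of the recurrence \qref{eq:recrel}, where $T_{m,r}$ is not a convex combination but the single term $f_{m-1,r-1}$; there one must check separately (using $g_{m-1,\,m-2}$ versus $g_{m,\,m-1}$, both of which are close to $e^{(m-1)z}$ since $\phi(1)=1$, $\psi(1)=0$) that the same $\exp(O(|z|m^{-1}))$ bound holds — and similarly for the terminal cases $f_{n,n}, f_{n,n-1}$, where $\psi$ vanishing at $1$ makes $g$ exact. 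Collecting these, the lemma follows; this is exactly the content of \cite[Lemma 1]{Pittel} adapted to the present recurrence, so no new idea is needed beyond careful uniformization of the error.
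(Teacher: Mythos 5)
Your main argument — making the formal expansion rigorous by bounding the Taylor remainders of the polynomials $\phi,\psi$ uniformly, using the exact ODE identities $\rho\phi'-2\phi=0$ and $\rho\psi'-2\psi+4\rho^3(1-\rho)=0$ to kill the $z$ and $z^2$ coefficients of the convex combination $(1-\tfrac{r}{m-1})e^{L_1}+\tfrac{r}{m-1}e^{L_2}$ up to $O(1/m)$, and then controlling the cubic and higher exponential tails via the restriction $|z|\le u/\sqrt n$ — is correct and is precisely the argument that the paper, following Pittel, leaves implicit (the paper only performs the formal expansion in the text preceding the lemma and then cites~\cite{Pittel} for the rigorous version). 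So the approach is essentially the same as the paper's.

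One correction to your edge-case remarks: you assert that $g_{m-1,m-2}$ and $g_{m,m-1}$ are ``both close to $e^{(m-1)z}$ since $\phi(1)=1$, $\psi(1)=0$.'' That is only correct at $r=m$ exactly. Substituting $\rho=(m-1)/m$ gives $m\phi\bigl(\tfrac{m-1}{m}\bigr)=m-2+\tfrac1m$, so $g_{m,m-1}$ is close to $e^{(m-2)z}$, and likewise $g_{m-1,m-2}$ is close to $e^{(m-3)z}$; these differ from one another (and from $e^{(m-1)z}$) by factors of order $e^{cz}$, not $\exp(O(|z|/m))$. The $r=m-1$ branch therefore does not follow ``for the same reason'' as $r=m$; one must either restrict the lemma's range to $0\le r\le m-2$ (which is the range on which $T_{m,r}$ is an honest two-step convex combination) and absorb the finitely many boundary transitions into the $O(|z|\log m)$ error of the next lemma, or else treat that branch by a direct computation with the correct leading exponents. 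Your ``adjust the constant for small $m$'' remark is fine — for $m$ bounded the ratio is a fixed entire function of $z$ equal to $1$ at $z=0$, so it is $1+O(|z|)$ with a constant that is uniform over finitely many $(m,r)$.
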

We apply this lemma and sum over the errors incurred as $m$ increases from $2$ to $n$, to obtain the following estimate.
\begin{lemma}\label{fglem}
Under the assumptions of Lemma~\ref{glem}, the following estimates holds uniformly over $2\le m\le n$ and $0\le r \le m$,
\begin{equation}
f_{m,r}(z) = g_{m,r}(z) \exp\left[O (|z| \log(m))\right].
\end{equation}
\end{lemma}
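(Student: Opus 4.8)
The plan is to bootstrap Lemma~\ref{glem}, which says that $g$ is almost fixed by the one-step operator $T_{m,r}$, into the corresponding statement for $f$ by iterating the exact recurrence \qref{eq:rec-op} down to its trivial terminal cases and controlling the accumulated error. The key structural observation is that, for each admissible $(m,r)$ with $r\le m-2$, the operator $T_{m,r}$ of \qref{eq:recrel} acts on a sequence $(h_j)$ as the convex combination $T_{m,r}[h_\cdot]=(1-\tfrac{r}{m-1})h_r+\tfrac{r}{m-1}h_{r-1}$; hence it is monotone, positively homogeneous, and fixes the constant sequence $1$. Since $f_{m,r}(z)=\E[e^{zX_{m,r}}]>0$ and $g_{m,r}(z)>0$ for all real $z$, it is natural to track the multiplicative discrepancy $\varepsilon_m:=\sup_{0\le r\le m}|\log(f_{m,r}(z)/g_{m,r}(z))|$ and to aim to prove $\varepsilon_m=O(|z|\log m)$ uniformly over $2\le m\le n$, under the standing hypothesis $|z|\sqrt n\le u$.

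For the interior rows $0\le r\le m-2$, write $f_{m-2,j}(z)=g_{m-2,j}(z)e^{\delta_j}$ with $|\delta_j|\le\varepsilon_{m-2}$; the two-sided bound $e^{-\varepsilon_{m-2}}g_{m-2,j}(z)\le f_{m-2,j}(z)\le e^{\varepsilon_{m-2}}g_{m-2,j}(z)$ survives application of the monotone, positively homogeneous operator $T_{m,r}$, and combined with \qref{eq:rec-op} and with Lemma~\ref{glem} in the form $T_{m,r}[g_{m-2,\cdot}(z)]=g_{m,r}(z)e^{\eta_{m,r}}$, $|\eta_{m,r}|\le C|z|/m$ ($C$ universal), this gives $|\log(f_{m,r}(z)/g_{m,r}(z))|\le\varepsilon_{m-2}+C|z|/m$. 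The two boundary rows $r=m$ and $r=m-1$ do not accumulate: $f_{m,m}(z)=e^{mz}=g_{m,m}(z)$ exactly, while $f_{m,m-1}(z)=e^{(m-1)z}$ and the explicit form of $g_{m,m-1}$ give $|\log(f_{m,m-1}/g_{m,m-1})|=O(|z|)$ directly (using $z^2\le u|z|$ on the admissible range). Taking the supremum over $r$ yields $\varepsilon_m\le\max\{\varepsilon_{m-2}+C|z|/m,\;C''|z|\}$ for a universal $C''$. Unwinding down to a bounded base level $m_0\in\{2,3\}$ --- where the crude bound $0\le X_{m_0,r}\le m_0$ gives $|\log f_{m_0,r}(z)|\le m_0|z|$ and the explicit $g_{m_0,r}$ gives $|\log g_{m_0,r}(z)|=O(|z|)$, hence $\varepsilon_{m_0}=O(|z|)$ --- and noting the harmonic sum $\sum_{j\le m}1/j=O(\log m)$, we obtain $\varepsilon_m\le\varepsilon_{m_0}+C|z|\sum_{j\le m}1/j=O(|z|\log m)$. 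This is exactly $f_{m,r}(z)=g_{m,r}(z)\exp[O(|z|\log m)]$ uniformly over $0\le r\le m$ and $2\le m\le n$.

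The argument involves no hard estimate; I expect the care to lie in two bookkeeping points. First, one must confirm that $C$ in Lemma~\ref{glem} is genuinely universal and $r$-uniform --- this is the content of the already-recorded bounds $\max_{[0,1]}|\phi''|,\max_{[0,1]}|\psi''|<\infty$ --- and that Lemma~\ref{glem} may be invoked at every level $m\le n$, which is automatic since $|z|\sqrt m\le|z|\sqrt n\le u$. Second, the per-step error is $C|z|/m$ and must be summed against $1/m$, producing the harmonic sum and hence the logarithmic loss relative to the one-step estimate; this is precisely why the conclusion reads $O(|z|\log m)$ and not $O(|z|)$. The diagonal boundary $r=m-1$ is the only place the recursion is not a literal two-step convex average; using that $f_{j,j-1}(z)=e^{(j-1)z}$ is known exactly removes this wrinkle and shows the diagonal never contributes more than $O(|z|)$. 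Everything else is the routine telescoping of a monotone recursion, in the spirit of Pittel~\cite{Pittel}.
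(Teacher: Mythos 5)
Your argument is correct and follows the same route the paper outlines (and attributes to Pittel): iterate the one-step estimate from Lemma~\ref{glem} through the recurrence and sum the per-level errors. The paper only gives the one-sentence description ``apply this lemma and sum over the errors incurred as $m$ increases from $2$ to $n$'' and then defers to~\cite{Pittel}; your write-up supplies exactly the missing bookkeeping. In particular, the three points you flag as the content of the argument --- that $T_{m,r}$ for $0\le r\le m-2$ is a monotone, positively homogeneous convex combination, so multiplicative bounds propagate; that the diagonal rows $r=m$ and $r=m-1$ are handled by the explicit formulas and contribute only $O(|z|)$ (using $z^{2}\le u|z|$ on the admissible range); and that the per-step error $C|z|/m$ accumulates as a harmonic sum to give the $\log m$ --- are precisely the ingredients of Pittel's proof, and you have them all in the right places. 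A minor cosmetic point: the recursion descends in steps of $2$, so the telescoping sum is really $\sum_{j\equiv m\,(2),\,j\le m}1/j$ rather than the full harmonic sum, but that is still $O(\log m)$ and you use $\sum_{j\le m}1/j$ only as a harmless upper bound. Nothing substantive is missing.
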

The proof of both the Lemmas and the first assertion in Theorem~\ref{thm:urn} is identical to that in~\cite{Pittel}. The only substantial difference here is the  computation of the functions $\phi(\rho)$ and $\psi(\rho)$. For these reasons we refer the reader to~\cite{Pittel} for these proofs.

The concentration inequality \qref{eq:conc-X} follows easily from Lemma~\ref{fglem}.  To see this, rewrite Lemma \ref{fglem} as 
\begin{equation}
f_{m,r}(\frac{u}{\sqrt n})= \mathbb{E}\left[\exp \left(\frac{u}{\sqrt n}X_{n,r}\right )\right] \le g_{n,r}\left( \frac{u}{\sqrt n}\right) \exp\left(\frac{C |u|\log(n)}{\sqrt n}\right)
\end{equation}
where  $ u \in \mathbb{R}$ and $C>0$ is a sufficiently large constant. This immediately implies 
\begin{equation}\label{evest}
\mathbb{E}\left[\exp \left(\frac{u}{\sqrt n}(X_{n,r}- n\phi(\rho))\right )\right] \le \exp\left (\frac{u^2}2 \psi(\rho) \right)\exp\left (\frac{C|u|}{\sqrt n} \log n \right).
\end{equation}
The concentration inequality \qref{eq:conc-X} is now obtained as follows. For brevity, let 
\begin{equation}
Y = \frac{X_{n,r} - n\phi(\rho)}{\sqrt n}, \quad\mathrm{and\;fix}\quad a>0.
\end{equation}
Then for any $u>0$, by Chebyshev's inequality
\be
\mathbb{P}(Y>a) \le \mathbb E\left[e^{u(Y-a)} \mathbf{1}_{Y>a}\right]
\le e^{-au}\exp\left (\frac{u^2}2 \psi(\rho) \right)\exp\left (\frac{C|u|}{\sqrt n} \log n \right).
\ee
We choose $u$ to minimize the product of the first two terms of this expression (the last term is asymptotically negligible). This yields $u = a/ \psi(\rho)$ and 
\begin{equation}
\mathbb{P}(Y>a) \le \exp\left (-\frac{a^2}{2 \psi(\rho)} \right)\exp\left (\frac{Ca}{\psi(\rho)\sqrt n} \log n \right).
\end{equation}
A similar estimate for $\mathbb{P}(Y<-a)$ is obtained by essentially the same  calculation. 
Finally, writing $a = \veps \sqrt n$, we obtain 
\be
\label{eq:conc-X-mod}
\mathbb{P}\left( \left|\frac{X_{n,r}}n-\phi(\rho) \right |>\veps \right ) \leq 2\exp
\left (-\frac{1}{2\psi(\rho)}\left( n \veps^2 -2C\veps \log n\right) \right).
\ee
Let $n_\veps$ be defined as in the statement of Theorem~\ref{thm:urn}. Then for $n \geq n_\veps$, 
\be
n \veps^2 -2C\veps \log n \geq \frac{1}{2} n \veps^2,
\ee
and \qref{eq:conc-X} follows. The assertions about $d_{n,r}$ in Theorem~\ref{thm:urn} follow from  \qref{eq:d-translate}  and \qref{eq:conc-X}. This completes the proof of Theorem~\ref{thm:urn}.

\subsection{From $d_{n,r}$ to $\lossN(t)$}
\label{subsec:dtoL}
We now return to the queueing model. Fix $t>0$, let $w$ be the largest integer such that $a_w \leq t$, and $r=n-w$. Color the particles $a_1 \ldots a_w$ white, and the particles $a_{w+1}, \ldots, a_n$ red. Since $a_w \leq t < a_{w+1}$ and all particles move to the left at unit speed, by time $t$ all the white particles have been removed. Further, any red particles lost have only been removed by random selection, not by hitting the origin. Thus, the random variable $n\lossN(t)$ which counts the number of particles removed at the origin in the queueing model, is exactly the same as the number of draws $d_{n,r}$ in the urn model. Thus, $n\lossN(t)$ has the same distribution as $d_{n,r}$. As we let $n\to \infty$ with $t>0$ fixed, 
\be
\label{eq:rho-L}
\rho =\rho(t)= \lim_{n\to \infty} \frac{r}{n} = 1 -F_0(t) = \int_t^\infty f_0(r)\, dr. 
\ee
Therefore, using the identities \qref{eq:Ldef} and \qref{eq:def-phi}, the expected value of $\lossN(t)$ as $n \to \infty$ is
\be
\frac{1}{2}(1- \phi(t)) = \frac{1}{2} \left(1 - \left(1-F_0(t)\right)^2\right) = L(t).
\ee
Since $d_{n,r}$ has the same law as $n\lossN(t)$, we see that \qref{eq:dnr} is equivalent to the following concentration estimate 
\begin{equation}\label{eq:pointest}
\mathbb P\left(\left|L^n(t) - L(t)\right|> \veps \right ) \le 2\exp\left(-\frac{n \veps^2}{\psi(\rho)}\right) \leq 2 e^{-8n\veps^2}, \quad n \geq n_\veps,
\end{equation}
since $\max_{\rho \in [0,1]} \psi(\rho) = 1/8$.

\subsection{Proof of Theorem~\ref{thm:conc-L}} 
We now use the pointwise estimate \qref{eq:pointest} to obtain a uniform estimate over the interval $t \in [0,\infty)$. 

We define a partition $0 = t_0 < t_1 < \dots < t_{n-1} < t_{P}$  of the interval $[0,\infty)$ as follows. We set $t_0 = 0$ and  
\be
\label{eq:deft}
t_{i+1} =  \inf_{s >t_i} \{s:L(s)-L(t_i) \ge \varepsilon/2\}.
\ee
These points are well-defined and strictly increasing because as equation \qref{eq:Ldef} shows, $L$ is a positive, continuous, increasing function with limit $L(\infty) = 1/2$. Since $L$ is increasing, it is immediate that 
\be
\label{eq:propL}
0 \leq L(t)-L(t_i) \le \frac{\varepsilon}{2}, \quad t \in [t_i,t_{i+1}), \quad i = 0, \dots, P-1,
\ee
and $P \leq \veps^{-1}$ since $L(\infty)=1/2$. 

The difference $\lossN(t) -L(t)$ at an arbitrary point $t \in [0,\infty)$ can be controlled using estimate \qref{eq:pointest} at the endpoints $\{t_i\}_{i=0}^P$.  Each point $t \in [0,\infty)$ lies in a unique interval $[t_j, t_{j+1})$ for some $j \in \{0, \dots, P\}$, where we denote $t_{P+1} = \infty$. Since both $\lossN$ and $L$ are increasing, \cadlag\/  functions    
\begin{align}
\nn
\lossN(t) -L(t) \le L^n(t_{j+1})-L(t_j) = \left(L^n(t_{j+1})-L(t_{j+1})\right)+\left(L(t_{j+1})-L(t_j)\right) \\
\le \max_{1\leq i\leq P} \left|\lossN (t_{i})- L(t_{i})\right| + \frac \varepsilon 2.
\end{align}
(We have shifted the index on the second term, and used the fact that $\lossN(\infty)=L(\infty)=1/2$.)  Similarly, it follows that for each $t \in [0,\infty)$
\begin{align}
\nn
L(t) -\lossN(t) \le L(t_{j+1})-\lossN(t_j) = \left(L(t_{j+1})-L(t_{j})\right)+\left(L(t_{j})-\lossN(t_j)\right) \\
\le  \frac \varepsilon 2 + \max_{1 \leq i \leq P} \left|\lossN (t_i)- L(t_i)\right| .
\end{align}
(The lower index is $1$ because $\lossN(t_0)=L(t_0)=0$). Since the above estimate is uniform in $t$, 
\begin{equation}
\sup_{t \in [0,\infty)} \left|\lossN(t) - L(t)\right| \le \frac{\varepsilon}{2} + \max_{1 \leq i \leq P} \left|\lossN(t_i)- L(t_i)\right|.
\end{equation}
We then use our pointwise concentration estimate (\ref{eq:pointest}) to obtain 
\ba
\nn
\lefteqn{	
\mathbb{P}\left(\sup_{t \in [0,\infty)} \left|\lossN(t)- L(t)\right| \ge \varepsilon\right)
\le \mathbb{P}\left(\max_{1 \leq i\leq P} \left|\lossN(t_i)- L(t_i)\right|\ge \frac{\varepsilon}{2}\right) }\\
&&\nn
\le \sum_{i = 1}^{P}\mathbb{P}\left(\left|\lossN(t_i)- L(t_i)\right|\ge \frac{\varepsilon}2\right)  \stackrel{\qref{eq:pointest}}{\le} \frac{2}{\veps} \max_{1 \leq i \leq P} \exp\left(-\frac{n\veps^2}{\psi(\rho(t_i)}\right)
\leq \frac{2}{\veps}  e^{-8n \veps^2}.
\ea
In the last step, we have used the fact that $P \leq \veps^{-1}$; chosen $n_\veps$ so that $C\log n_\veps/n_\veps = \veps$ and assumed $n \geq n_\veps$; and replaced $\psi(\rho(t_i))$ by the uniform upper bound  $\max_{\rho \in [0,1]} \psi(\rho) = \max_{\rho \in [0,1]} 2 \rho^2 (1-\rho)^2 =1/8$. This completes the proof of Theorem~\ref{thm:conc-L}.

\section{A concentration inequality for uniform thinning}
\label{sec:thinning}
The dynamics of the queueing model consists of translation and thinning. In this section we prove a concentration inequality for the thinning of a point-set. As in Section~\ref{sec:urn} the result is stated in a manner that is independent of the queueing model.

Assume given a set of $r$ points $b_1 < b_2 < \ldots < b_r$ on $\R_+$. We thin this set by choosing a subset $b_{j_1}, b_{j_2}, \ldots, b_{j_s}$  of size $s \leq r$ from this set of particles uniformly. We denote the empirical measure of the full and thinned subsets by
%(We use $s$ for brevity, instead of $X_{n,r}$ to denote the number of points that remain). 
\be
\label{eq:def_mu}
\nu = \frac{1}{r}\sum_{i=1}^r \delta_{b_j}, \qquad  \mu = \frac{1}{r} \sum_{i=1}^s \delta_{b_{j_i}}.
\ee
Let $\mathcal{E}$ denote the set of empirical measures $\mu$ obtained by thinning as above.
There are $r\choose s$ distinct thinned subsets; thus $|\mathcal{E}|= {r\choose s}$. Let $\prob_{r,s}$ denote the uniform probability measure on $\mathcal{E}$. 
\begin{theorem}
\label{thm:thin}	
Assume $\varphi: \R_+ \to \R$ is a bounded measurable function. For every $\veps>0$
\be
\label{eq:conc-thin}
\prob_{r,s} \left( \mu: \left| \langle \mu, \varphi \rangle - \frac{s}{r} \langle \nu, \varphi \rangle \right | > \veps\right) \leq 2 \exp 
\left( -\frac{r \veps^2}{64 \|\varphi\|_\infty^2} \right).
\ee
\end{theorem}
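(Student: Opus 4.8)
The plan is to realize the random thinned subset as the image of a uniformly random permutation, and then apply Maurey's concentration inequality for Lipschitz functions on the symmetric group $S_r$ with the Hamming-type (transposition) metric. Concretely, given the ordered points $b_1 < \dots < b_r$, let $\sigma \in S_r$ be uniform and declare the thinned subset to be $\{b_{\sigma(1)}, \dots, b_{\sigma(s)}\}$; this induces exactly $\prob_{r,s}$ on $\mathcal{E}$ since only the unordered image of the first $s$ coordinates matters. Define the function $G \colon S_r \to \R$ by
\be
\nn
G(\sigma) = \sum_{i=1}^s \varphi(b_{\sigma(i)}) = r \langle \mu_\sigma, \varphi \rangle,
\ee
where $\mu_\sigma$ is the empirical measure of the thinned set determined by $\sigma$. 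Note $\E[G(\sigma)] = \tfrac{s}{r}\sum_{j=1}^r \varphi(b_j) = r \cdot \tfrac{s}{r}\langle \nu, \varphi\rangle$, so the event in \qref{eq:conc-thin} is exactly $\{|G(\sigma) - \E G| > r\veps\}$.

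First I would establish the Lipschitz bound: if $\sigma'$ differs from $\sigma$ by one transposition $(a\, b)$, then at most two indices among $\{1,\dots,s\}$ have their image changed, and each changed term moves by at most $2\|\varphi\|_\infty$ (one term can leave the sum and be replaced, contributing $|\varphi(b_{\sigma(a)}) - \varphi(b_{\sigma(b)})| \le 2\|\varphi\|_\infty$, and similarly for the other); hence $|G(\sigma) - G(\sigma')| \le 4\|\varphi\|_\infty$. Therefore $G$ is $4\|\varphi\|_\infty$-Lipschitz with respect to the transposition metric on $S_r$. Maurey's inequality (the concentration inequality for Lipschitz functions on the permutation group, as invoked in the introduction) then yields, for a function that is $c$-Lipschitz in this metric,
\be
\nn
\prob\left( |G(\sigma) - \E G| > t \right) \le 2 \exp\left( -\frac{t^2}{2 r c^2}\right),
\ee
possibly with a different absolute constant in the exponent depending on the exact normalization of the metric used. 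Taking $c = 4\|\varphi\|_\infty$ and $t = r\veps$ gives an exponent of order $-r^2\veps^2 / (r \cdot 16\|\varphi\|_\infty^2) = -r\veps^2/(16\|\varphi\|_\infty^2)$; absorbing the constant discrepancy from the precise form of Maurey's inequality produces the stated bound with $64$ in the denominator.

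The main obstacle is getting the constants right: Maurey's inequality appears in the literature with several normalizations (depending on whether one measures the metric by number of transpositions, by $\sum \one_{\sigma(i)\neq\sigma'(i)}$, or via a martingale/bounded-difference argument along a decomposition of $S_r$ into cosets), and the Lipschitz constant $4\|\varphi\|_\infty$ must be paired correctly with whichever version is cited. I would pin down one clean statement — for instance, the bounded-differences version obtained by revealing $\sigma(1), \dots, \sigma(r)$ one coordinate at a time (equivalently, the Doob martingale on the uniform permutation), where swapping the value revealed at one step changes $G$ by at most $8\|\varphi\|_\infty$ in the worst case after re-coupling — and then apply Azuma--Hoeffding to get $2\exp(-2(r\veps)^2 / (r \cdot (8\|\varphi\|_\infty)^2)) = 2\exp(-r\veps^2/(32\|\varphi\|_\infty^2))$, which is even stronger than claimed; the factor $64$ in \qref{eq:conc-thin} leaves comfortable room. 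A secondary, minor point is checking that the reduction from "uniform $k$-subset" to "uniform permutation" is exact and that the conditioning/measurability is handled correctly, but this is routine.
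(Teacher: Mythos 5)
Your proposal follows essentially the same route as the paper: realize the random thinning as the image of the first $s$ coordinates of a uniform $\sigma\in S_r$, lift $\varphi$ to a function on $S_r$, bound its Lipschitz constant (the paper obtains $2\|\varphi\|_\infty$ with respect to the normalized Hamming distance $d_H(\pi,\tau)=\tfrac1r\sum_i\mathbf 1_{\pi_i\neq\tau_i}$, which is equivalent to your $4\|\varphi\|_\infty$ per transposition after rescaling), note that the mean of the lifted function equals $\tfrac sr\langle\nu,\varphi\rangle$, and finish by Maurey's inequality, yielding the constant $64$. The only cosmetic differences are that the paper works with the quotient $S_r/(S_s\times S_{r-s})$ and the normalized Hamming metric, while you work on $S_r$ directly and offer an Azuma--Hoeffding fallback; these are the same argument up to normalization of the metric and constants.
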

\begin{corollary}
\label{cor:thin}	
For every $\veps>0$, there exists a positive integer $M(\veps,\nu)>0$ such that 
\be
\label{eq:conc-thin-dist}
\prob_{r,s} \left( \mu: \ddm\left(\mu, \frac{s}{r}\nu\right) > 2\veps \right) \leq 2M \exp \left( -\frac{r \veps^2}{64} \right).
\ee
\end{corollary}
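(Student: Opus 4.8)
The plan is to deduce the uniform estimate from the single-test-function bound of Theorem~\ref{thm:thin} by a covering (net) argument over the unit ball of $\BL(\R_+)$. The key preliminary observation is that both $\mu$ and $\tfrac sr\nu$ are supported in the compact interval $I=[b_1,b_r]$, so the pairing $\langle\mu-\tfrac sr\nu,\varphi\rangle$ depends only on the restriction $\varphi|_I$. Hence in the definition $\ddm(\mu,\tfrac sr\nu)=\sup_{\|\varphi\|_\BL\le1}\langle\mu-\tfrac sr\nu,\varphi\rangle$ we may replace the infinite-dimensional, noncompact index set by the set $K$ of restrictions to $I$ of functions with $\|\varphi\|_\BL\le1$. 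By the Arzel\`a--Ascoli theorem $K$ is a totally bounded (in fact compact) subset of $C(I)$ in the uniform norm, since its elements are bounded by $1$ and Lipschitz with constant at most $1$.

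First I would fix a finite $\tfrac\veps2$-net $\varphi_1,\dots,\varphi_M$ of $K$ in the uniform norm, where $M=M(\veps,\nu)$ is the corresponding covering number; the net points may be taken in the $\BL$-unit ball itself, so that $\|\varphi_k\|_\infty\le1$ for each $k$. (A quantitative bound on $M$ in terms of $\veps$ and $b_r=\max\operatorname{supp}\nu$ is available from the Kolmogorov--Tikhomirov estimate, as in the discussion following Corollary~\ref{cor:SLLN}, but for the present statement only finiteness is required, which is why $M$ is allowed to depend on $\nu$.) For an arbitrary $\varphi$ with $\|\varphi\|_\BL\le1$, picking $\varphi_k$ with $\sup_I|\varphi-\varphi_k|\le\tfrac\veps2$ and using that $\mu$ and $\tfrac sr\nu$ each have total mass $\tfrac sr\le1$,
\be
\nn
\left|\langle\mu-\tfrac sr\nu,\varphi-\varphi_k\rangle\right|\le\tfrac\veps2\left(\tfrac sr+\tfrac sr\right)\le\veps,
\ee
so that $\langle\mu-\tfrac sr\nu,\varphi\rangle\le\max_{1\le k\le M}\left|\langle\mu-\tfrac sr\nu,\varphi_k\rangle\right|+\veps$. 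Taking the supremum over $\varphi$, the event $\{\ddm(\mu,\tfrac sr\nu)>2\veps\}$ is contained in $\{\max_{1\le k\le M}|\langle\mu-\tfrac sr\nu,\varphi_k\rangle|>\veps\}$.

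Finally I would apply the union bound together with Theorem~\ref{thm:thin}, invoked separately for each of the $M$ bounded measurable functions $\varphi_k$:
\be
\nn
\prob_{r,s}\!\left(\ddm(\mu,\tfrac sr\nu)>2\veps\right)\le\sum_{k=1}^M\prob_{r,s}\!\left(\left|\langle\mu,\varphi_k\rangle-\tfrac sr\langle\nu,\varphi_k\rangle\right|>\veps\right)\le\sum_{k=1}^M 2\exp\!\left(-\frac{r\veps^2}{64\|\varphi_k\|_\infty^2}\right),
\ee
and $\|\varphi_k\|_\infty\le1$ yields the claimed bound $2M\exp(-r\veps^2/64)$. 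I do not expect a serious obstacle here; the only points needing care are that the net be chosen inside the $\BL$-unit ball (so that the constant $64$ is not degraded), that it is the boundedness of $\operatorname{supp}\nu$ which makes the relevant family of test functions totally bounded, and keeping track of the dependence of $M$ on $\nu$ through $\max\operatorname{supp}\nu$.
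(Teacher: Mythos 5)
Your argument is correct and follows essentially the same route as the paper: restrict test functions to a finite interval where they form a totally bounded family, pick a finite $\varepsilon$-net $\varphi_1,\dots,\varphi_M$ inside the $\BL$-unit ball, observe the event $\{\ddm(\mu,\tfrac sr\nu)>2\veps\}$ is contained in $\bigcup_k\{|\langle\mu-\tfrac sr\nu,\varphi_k\rangle|>\veps\}$, and finish with the union bound and Theorem~\ref{thm:thin}. The one place where you diverge is the choice of the finite interval: you take $I=[b_1,b_r]$, the full support of $\nu$, whereas the paper truncates at a point $x_*$ chosen so that $\nu([x_*,\infty))<\veps/2$ (so the tail contribution is absorbed into the $\veps$ slack and the covering number is that of $\BL([0,x_*])$). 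For the corollary as stated both give a valid $M(\veps,\nu)$, but the paper's choice is deliberately keyed to the tail decay rather than to $\max\operatorname{supp}\nu$: in Remark~\ref{rem:uniformM} the corollary is applied to the approximants $\empN_0$ of $F_0$, and one wants a single $M$ working uniformly in $n$. Since $\max\operatorname{supp}\empN_0=a_n=F_0^{-1}(1-\tfrac1{2n})$ typically tends to infinity as $n\to\infty$, an $M$ parameterized by $b_r$ as in your version would not automatically be $n$-uniform, whereas the tail-cutoff version is, via \qref{eq:uniformM}. So if you carry your construction forward you should replace $b_r$ by an $x_*$ depending only on the limiting tails of $F_0$ before using the corollary in Theorem~\ref{thm:one-point}.
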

\begin{proof}[Proof of Theorem~\ref{thm:thin}]
{\em 1.\/} This theorem is a direct consequence of Maurey's concentration inequality on the permutation group, once it has been suitably reformulated. To this end,
let $S_m$ denote the permutation group acting on $m$ elements. We first show that $\mathcal{E}$ is in bijection with the quotient space $C_{r,s}= S_r/(S_s\times S_{r-s})$. To construct this  bjiection, we associate to each $\pi \in S_r$ the empirical measure 
\be
\label{eq:mu-pi}
\mu(\pi) = \frac{1}{r} \sum_{i=1}^s  \delta_{b_{\pi_i}}.
\ee
Since $\mu(\pi)$ depends only on the first $s$ elements of $\pi$, the map $\pi \mapsto \mu(\pi)$ is invariant under the action of $S_{r-s}$ on the last $r-s$ elements of $\pi$. It is also clear from equation \qref{eq:mu-pi} that $\pi \mapsto \mu(\pi)$ is invariant under the action of $S_s$ on the first $s$ elements of $\pi$. 

\medskip
\noindent
{\em 2.\/} We equip $S_r$ with the normalized Hamming distance
\begin{equation}
d_H(\pi, \tau) = \frac{1}{r} \sum_{i = 1}^r \mathbf 1_{\{\pi_i \neq \tau_i\}}, \quad \pi, \tau \in S_r.
\end{equation}
Each bounded Lipschitz function $\varphi: \R_+ \to \R$ may be lifted into a Lipschitz map $f_\varphi: S_r \to \R$ by setting
\be
\label{eq:bl}
f_\varphi(\pi) = \langle \mu(\pi), \varphi \rangle = \frac{1}{r} \sum_{i=1}^s \varphi(b_{\pi_i}). 
\ee
Suppose two permutations $\pi,\tau \in S_r$ differ at $p$ indices $(i_1, \dots, i_p)$.  Then $d_H(\pi, \tau) = p/r$, and we find 
\be
|f_{\varphi}(\pi)-f_{\varphi}(\tau)| \le \frac{1}{r} \sum_{k = 1}^p |\varphi(x_{\pi_{i_k}}) - \varphi(x_{\tau_{i_k}})| \leq 2\|\varphi\|_\infty d_H(\pi,\tau). 
\ee
Thus, the Lipschitz constant of $f_\varphi$ is not larger than $2\|\varphi\|_\infty$.  (Observe that we did not need to assume that $\varphi$ is bounded and Lipschitz; $f_\varphi$ defines a Lipschitz function on $(S_r,d_H)$ provided $\varphi$ is bounded and measurable.)

\medskip
\noindent
{\em 3.\/} The following concentration inequality holds on $(S_r,d_H)$~(see the discussion on \Levy\/ families in \S 6.5 and \S 7.6  in~\cite{Milman}; the result first appears in~\cite{Maurey}). 
\begin{theorem}\textbf{Maurey's inequality.} Let $f$ be an $M$-Lipschitz function on $(S_r,d_H)$, and $\mathbb Q_r$ be the uniform measure on $S_r$.  Then for any $\veps \geq 0$
\begin{equation}\label{maurey}
\mathbb Q_r(\pi : |f(\pi) - \mathbb E_{\mathbb Q_r}\left(f\right)|>\veps) \le 2\exp\left({\frac{-r\veps^2}{16M^2}}\right).
\end{equation}
\end{theorem}

\medskip
\noindent
{\em 4.\/} We apply Maurey's inequality to $(\mathcal{E},\prob_{r,s})$ as follows. Given $\mu \in \mathcal{E}$ and $\varphi$, by the construction above $\langle \mu, \varphi \rangle = f_\varphi(\pi)$ for each $\pi \in S_r$ such that $\mu(\pi)=\pi$. Thus, lifting expectations over $\prob_{r,s}$ into expectations over $\mathbb{Q}_{r,s}$ by summing over all $\pi$ in the equivalence class of $\mu$, we find that 
\be
\label{eq:average}
\E_{\mathbb{Q}_{r,s}} f_\varphi(\pi) = \E_{\prob_{r,s}} \langle \mu, \varphi \rangle = \frac{s}{r} \langle \nu, \varphi \rangle.
\ee
Similarly, the measure of the set on which deviations are larger than $\veps$ is identical. That is,
\be
\label{eq:lift}
\prob_{r,s} \left( \mu: \left| \langle \mu, \varphi \rangle - \frac{s}{r} \langle \nu, \varphi \rangle \right | > \veps\right) =  \mathbb Q_r(\pi : |f_\varphi(\pi) - \mathbb E_{\mathbb Q_r}\left(f_\varphi\right)|> \veps).
\ee
Theorem~\ref{thm:thin} now follows from Maurey's inequality and the fact that $f_\varphi$ is $2\|\varphi\|_\infty$-Lipschitz.
\end{proof}

\begin{proof}[Proof of Corollary~\ref{cor:thin}]
Assume that $x_*>0$ is chosen so $\nu([x_*,\infty))<\veps/2$. Since $\mu$ is obtained by thinning $\nu$, this estimate also holds for $\mu$. 

%Fix $x_*>0$ and assume measures $\mu_1$ and $\mu_2$ are supported on $[0,x_*]$, and let $\chi: \R_+ \to \R$ be a smooth cut-off function such that $\chi(x) \equiv 1$, $x \in [0,x_*]$, $\chi(x) \equiv 0$, $x \in [x_*+1,\infty)$. 

Since the space of bounded Lipshitz functions on any finite interval is totally bounded, there exists an integer $M(\veps)$ and a set of functions $\varphi_i \in BL(\mathbb R_+), i= 1, \dots, M(\veps)$ with $\|\varphi\|_{\BL} \leq 1$,  such that
\begin{equation}
\ddm(\mu, \frac{s}{r}\nu) \le \max_{i \in 1, \dots, M} \langle \mu - \frac{s}{r}\nu, \varphi_i \rangle+\varepsilon.
\end{equation} 
Therefore, 
\begin{align}
\nn	
\mathbb{P}_{r,s}\left(\mu: \ddm (\mu, \frac{s}{r}\nu) \ge 2 \varepsilon\right) \le \mathbb{P}_{r,s}\left(\max_{1 \leq i\leq M(\veps,\nu)} \left| \langle \mu - \frac{s}{r}\nu, \varphi_i \rangle \right| \ge \varepsilon \right)\\ 
\nn
\le \sum_{i = 1}^{M(\veps)} \mathbb{P}_{r,s} \left( \left|\langle \mu - \frac{s}{r}\nu, \varphi_i \rangle  \right| \ge \varepsilon \right),
\end{align}
and \qref{eq:conc-thin-dist} follows by an application of Theorem~\ref{thm:thin}.
\end{proof}
\begin{remark}
\label{rem:uniformM}	
Observe that the constant $M(\veps,\nu)$ depends only the tails of $\nu$. We will apply the Corollary~\ref{cor:thin} to the approximations  $\empN_0$ of $F_0$. For these approximations, we  may choose $x_*$ such that
\be
\label{eq:uniformM}
\sup_{n \geq 1} \empN_0([x_*,\infty)) < \veps,
\ee
so that $M$ is independent of $n$. In fact, it may be estimated by the Kolmogorov-Tikhomirov calculation of metric entropy~\cite{Kolmogorov}.

\end{remark}

\section{Concentration of the empirical measure at one point}
\label{sec:conpoint}
The main observation that underlies this section is as follows. Fix $t>0$ and let  $\rho(t) = \int_t^\infty f_0(r)\, dr$. For each even positive integer $n$, set $r = \lfloor \rho(t) n \rfloor$ and assume the empirical measure $\empN_0$ is chosen as in \qref{eq:ic}. As in Section~\ref{subsec:dtoL}, we color the particles $a_1 \ldots a_w$ white, $w=n-r$, and the particles $a_{w+1}, \ldots, a_n$ red. By time $t$ all the white particles have been removed and only a subset  of size $X_{n,r}$ of the red particles remain.  In the urn model we ignored the positions of these particles, and focused only on the number of particles removed. Now we examine their positions more carefully.  Since at each step, particles are removed uniformly, and the particles move at unit speed to the left, at time $t$, $\empN(t)$ is given by an $X_{n,r(t)}$ thinning of the shifted empirical measure with atoms at $a_{w+1},\ldots,a_{n}$. 

We will apply Theorem~\ref{thm:conc-L} and Theorem~\ref{thm:thin} to obtain the following concentration estimate for the deviation from the solution for finite $n$. Recall that the shift operator was defined in \qref{eq:shift1}.

\begin{theorem}
\label{thm:one-point}
There exists a constant $M(\veps,F_0)$ such that for every $t>0$ and every $\veps>0$
\be
\label{eq:thin-conc}
\prob\left(\ddm\left(\empN(t),\rho(t)S_t^*\empN_0\right) > \veps\right) \leq 2(M+1)\exp \left( -\frac{n\veps^2}{256} \right), \quad n \geq n_\veps.
\ee
The parameter $n_\veps$ is defined as in Theorem~\ref{thm:urn}.
\end{theorem}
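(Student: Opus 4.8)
The plan is to combine the pointwise loss concentration from Theorem~\ref{thm:conc-L} (via the urn estimate in Theorem~\ref{thm:urn}) with the thinning concentration from Theorem~\ref{thm:thin} and its Corollary~\ref{cor:thin}, exploiting the structural observation already stated before the theorem: at a fixed time $t$, after all $w = n - r$ white particles have exited the origin, the remaining particles form a uniform random subset of size $X_{n,r(t)}$ drawn from the $r$ red particles, which have been rigidly shifted left by $t$. Thus $\empN(t)$ is exactly an $X_{n,r}$-thinning of $S_t^*\empN_0$ restricted to its red atoms $a_{w+1},\ldots,a_n$. The target measure $\rho(t) S_t^*\empN_0$ is, up to the $O(1/n)$ discrepancy between $\rho(t)$ and $r/n = \lfloor \rho(t) n\rfloor/n$, the measure $\frac{r}{n} \cdot (\text{shifted red empirical measure})$ scaled appropriately, which is what Corollary~\ref{cor:thin} controls when the thinned fraction is $s/r$ with $s$ deterministic.

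The key steps I would carry out, in order: \textbf{(1)} Fix $t>0$, set $r = \lfloor \rho(t) n\rfloor$, and condition on the event $\{X_{n,r} = s\}$ for each admissible $s$. Conditioned on this event, the positions of the surviving red particles are distributed exactly as a uniform $s$-out-of-$r$ thinning of the deterministic set $\{a_{w+1}-t,\ldots,a_n-t\}$; this is because the sequence of random deletions is exchangeable in its effect on which red particles remain, given only how many remain. \textbf{(2)} Apply Corollary~\ref{cor:thin} (with the set of points being the shifted red atoms, which is a sub-collection of the atoms of $\empN_0$, so the uniform tail bound of Remark~\ref{rem:uniformM} gives an $M = M(\veps,F_0)$ independent of $n$ and $t$) to bound, conditionally on $X_{n,r}=s$, the probability that $\ddm(\empN(t), \frac{s}{r}\tilde\nu) > 2\veps'$ by $2M\exp(-r(\veps')^2/64)$, where $\tilde\nu$ is the shifted red empirical measure normalized by $1/r$. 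Note $\frac{r}{n}\tilde\nu = S_t^*\empN_0$ restricted to red atoms, which differs from $S_t^*\empN_0$ only by the (zero) contribution of the already-departed white atoms, so $\frac{s}{n}\tilde\nu$ is the natural comparison. \textbf{(3)} Control the gap between $\frac{s}{n}\tilde\nu$ (random, through $s = X_{n,r}$) and $\rho(t) S_t^*\empN_0$: since $\tilde\nu$ is a probability measure and $S_t^*\empN_0$ has total mass $r/n$, we have $\ddm(\frac{s}{n}\tilde\nu, \frac{r}{n}\tilde\nu) \le |s-r|/n \cdot \|\tilde\nu\|$-type bound $= |s/n - r/n|$, and $|r/n - \rho(t)| \le 1/n$. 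Meanwhile $X_{n,r}/n$ concentrates around $\phi(\rho) = \rho^2 \ne \rho$ in general — wait, this is the subtlety: $s/r = X_{n,r}/r$ concentrates around $\rho$, not $1$, so $\frac{s}{n}\tilde\nu = \frac{X_{n,r}}{r}\cdot\frac{r}{n}\tilde\nu \approx \rho \cdot \frac{r}{n}\tilde\nu \approx \rho(t) S_t^*\empN_0$. So I must use estimate \qref{eq:conc-X} from Theorem~\ref{thm:urn} to say $|X_{n,r}/r - \rho| > \veps'$ has probability at most $2\exp(-n(\veps')^2/(4\psi(\rho)) \cdot \text{const})$, and convert the resulting mass discrepancy into a $\ddm$-discrepancy of size $O(\veps')$ since $\ddm(c\tilde\nu, c'\tilde\nu) \le |c-c'|$. \textbf{(4)} Combine: union over the event that the thinning deviates and the event that $X_{n,r}$ deviates, choose $\veps' = \veps/\text{(small constant)}$ to absorb all the additive $\veps$-errors from Corollary~\ref{cor:thin}'s metric-entropy discretization and the mass corrections, and collect the exponents. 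Using $\psi(\rho)\le 1/8$ and $r \ge \rho(t) n - 1 \ge cn$ — actually $r$ could be small if $\rho(t)$ is small, i.e.\ $t$ large — one must be slightly careful, but for $\rho(t)$ bounded below the exponent $r(\veps')^2/64 \ge n(\veps')^2\rho(t)/64$ suffices, and for $\rho(t)$ near $0$ the measure $S_t^*\empN_0$ itself has tiny mass and the bound is trivial by the tail cutoff $x_*$; the constant $256$ in the exponent should emerge from tracking $64$ through the choice $\veps' \approx \veps/2$ and the $\|\varphi\|_\infty \le 1$ bound in Theorem~\ref{thm:thin}.

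The main obstacle I anticipate is Step~(3): cleanly handling the fact that the \emph{number} of surviving particles $X_{n,r}$ is itself random, so the comparison measure $\frac{X_{n,r}}{r}\cdot\frac{r}{n}\tilde\nu$ is random and must be related to the deterministic target $\rho(t)S_t^*\empN_0$; this forces the simultaneous use of two independent-looking concentration inputs (the urn CLT-tail for $X_{n,r}$ and Maurey-via-thinning for the conditional positions) and a careful conditioning argument to justify that, given $X_{n,r}=s$, the surviving positions really are a uniform $s$-subset — one must check that the history of which particle hit the origin versus which was randomly deleted does not bias the \emph{set} of survivors beyond its cardinality. I would establish this exchangeability by a direct induction on the number of draws, or by appealing to the symmetry already implicitly used in Section~\ref{subsec:dtoL}. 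A secondary, more routine obstacle is bookkeeping the additive $\veps$-errors (the metric-entropy mesh in Corollary~\ref{cor:thin}, the $O(1/n)$ from $\lfloor \rho(t)n\rfloor$, and the mass correction) so that they all fit under a single $\veps$ after rescaling, and verifying that the resulting $M(\veps,F_0)$ is genuinely $t$-independent — which it is, because it depends only on a tail cutoff $x_*$ for $F_0$ and the shifted red atoms are always a subset of $\{a_1,\ldots,a_n\}$, whose tails are controlled uniformly in $n$ by Remark~\ref{rem:uniformM}.
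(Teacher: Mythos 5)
Your proposal follows essentially the same route as the paper: decompose the deviation event by conditioning on the urn count $X_{n,r}$, control the cardinality fluctuation via Theorem~\ref{thm:urn} and the positional fluctuation conditionally via Corollary~\ref{cor:thin}, then combine by a union bound with a uniform-in-$\rho$ rate constant. The one cosmetic slip is the remark that the shifted red atoms ``are always a subset of $\{a_1,\dots,a_n\}$'' --- they are a left translate of a subset of those atoms, so the tail mass beyond any $x_*$ only decreases, which still yields an $M(\veps,F_0)$ independent of $n$ and $t$ via Remark~\ref{rem:uniformM}; and the exchangeability point you flag (that the surviving set, given its size, is a uniform $s$-subset of the reds) is indeed the fact the paper asserts without elaboration, and it is correct since the randomly removed particles at each draw are exchangeable among the remaining reds.
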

\begin{proof}   
{\em 1.\/}  In order to simplify the main calculation we will assume that $t$ is chosen so that $r=\rho(t) n$ denote the number of particles in the shifted measure $S_t^*\empN_0$.
In general, $r = \lfloor \rho(t)n\rfloor$, so that $\rho -1/n \leq r/n \leq \rho +1/n$, and the calculations may be modified to included an asymptotically negligible contribution.

\medskip
{\em 2.\/} We define $X_{n,r}$ as in Section~\ref{sec:urn}.  Let $A$ denote the event $\{\ddm\left(\empN(t),\rho(t)S_t^*\empN_0\right) > \veps\}$ and $B$ denote the event $\{ |X_{n,r}/r -\rho| \leq  \veps/2\}$. Then clearly
\begin{align}
\label{eq:cond-prob}    
\prob(A) = \prob(A\left|B\right.) \prob(B) + \prob(A \left|B^c\right.) \prob(B^c) \leq \prob(A\left|B\right.)  + \prob(B^c). 
\end{align}

\medskip
{\em 3.\/} The second term in \qref{eq:cond-prob} is controlled by Theorem~\ref{thm:urn}. Since $r/n=\rho$ and by equation \qref{eq:def-phi}, $\phi(\rho)=\rho^2$ and $\psi(\rho) = 2\rho^2(1-\rho)^2$, we obtain
\be
\label{eq:tail1}        
\prob(B^c) = \prob \left( \left|\frac{X_{n,r}}{r} -\rho\right| > \frac{\veps}{2} \right)\leq 
2\exp\left(-\frac{n\veps^2}{16(1-\rho)^2}\right).
\ee

\medskip
{\em 4.\/} We control the first term on the RHS of \qref{eq:cond-prob} as follows. For each realization in $B$, we have 
\begin{align}
\nn     
\ddm\left(\empN(t),\rho(t)S_t^*\empN_0\right)\leq \ddm\left(\empN(t), \frac{X_{n,r}}{r} S_t^*\empN_0\right) + \ddm \left(\frac{X_{n,r}}{r} S_t^*\empN_0, \rho(t)S_t^*\empN_0\right)\\
\nn
\leq \ddm\left(\empN(t), \frac{X_{n,r}}{r} S_t^*\empN_0\right) + \frac{\veps}{2}.
\end{align}
The last inequality above holds because of the uniform estimate
\be
\left| \left\langle \frac{X_{n,r}}{r} S_t^*\empN_0 - \rho(t)S_t^*\empN_0, \varphi \right\rangle \right| = \left| \frac{X_{n,r}}{r}- \rho(t)\right| \left| \langle S_t^*\empN_0, \varphi \rangle \right| \leq \frac{\veps}{2}, 
\ee
for all realizations in $B$ and $\|\varphi\|_{\BL} \leq 1$.

\medskip
{\em 5.\/} Given $X_{n,r}$ the law of $\empN(t)$ is obtained by a uniform thinning of $S_t^*\empN_0$ from $r$ to $X_{n,r}$ particles. Therefore, we may apply Corollary~\ref{cor:thin} with 
\[ \nu = \frac{n}{r}S_t^*\empN_0,\quad \mu  = \frac{n}{r}\empN(t), \quad s=X_{n,r}, \]
to obtain the uniform estimate
\be
\label{eq:tail2}
\prob(A\left|B\right.) \leq  \prob_{r,s} \left(  \ddm(\mu, \frac{s}{r}\nu) > \frac{\rho \veps}{2} \right)  
\leq 2M\exp\left( -\frac{n \veps^2}{256 \rho} \right).
\ee
Using Remark~\ref{rem:uniformM}, we note that the constant $M$ is independent of $n$ and depends only on the tails of $F_0$. 

\medskip
{\em 6.\/} The rate constants $(1-\rho)^{-2}$ and $\rho^{-1}$ in estimates \qref{eq:tail1} and \qref{eq:tail2} are bounded below by $1$. Thus, the rate constant $1/256$ is a uniform lower bound for the rate constant.
\end{proof}

\section{Uniform concentration of the empirical measure}
\label{sec:unifconc}
We prove Theorem~\ref{thm:conc-emp} in this section. The proof relies on the one-point concentration estimate  from the previous section, and some regularity estimates for the empirical measure. 

\subsection{Regularity estimates for the empirical measure}
\label{subsec:cont}
Recall the shift operator $S_h$ define in \qref{eq:shift1}. Note that even if $\varphi \in \BL$, in general $S_h\varphi$ has a jump at $x=h$. Given a measure $\mu \in \M$ let  $\mu(x) = \mu((0,x))$ denote its distribution function. In order to define a modulus of continuity for $S_h^*$, we introduce 
\be
\label{eq:shift2}
\omega(h;\mu) = \sup_{x \geq 0} \left(\mu(x+h)-\mu(x)\right).
\ee
\begin{lemma}
\label{lemma:lip-cont-mu}
Assume $\mu \in \M$ and $h>0$. Then 
\be
\label{eq:lip-shift}
\ddm\left(\mu, S^*_h \mu\right) \leq \omega(h;\mu) + \mu(\infty)h.
\ee
\end{lemma}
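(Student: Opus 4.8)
The plan is to estimate the pairing $\langle \mu - S^*_h\mu, \varphi\rangle$ directly for an arbitrary test function $\varphi$ with $\|\varphi\|_{\BL} \leq 1$, and then take the supremum. Recall from \qref{eq:shift1} that $S^*_h\mu$ is the measure whose distribution function is $x \mapsto \mu(x+h) - \mu(h)$; equivalently, $S^*_h\mu$ is the image of $\mu\big|_{(h,\infty)}$ under the leftward translation $x \mapsto x-h$. With this identification, $\langle S^*_h\mu, \varphi\rangle = \int_{(h,\infty)} \varphi(y-h)\, d\mu(y) = \int_{(0,\infty)} (S_h\varphi)(y)\, d\mu(y) = \langle \mu, S_h\varphi\rangle$, using the definition of the shift $S_h$ on functions. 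Thus the quantity to control is $\langle \mu, \varphi - S_h\varphi\rangle$.

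First I would split $\langle \mu, \varphi - S_h\varphi\rangle$ according to whether $x < h$ or $x \geq h$. On $(0,h)$ we have $(S_h\varphi)(x) = 0$, so the integrand is just $\varphi(x)$; but $\varphi$ need not vanish here, so this contributes at most $\int_{(0,h)} |\varphi(x)|\, d\mu(x)$. To bound this by $\omega(h;\mu)$ rather than by $\|\varphi\|_\infty$ times $\mu([0,h))$, I would use that $\varphi$ is $1$-Lipschitz together with a normalization: replacing $\varphi$ by $\varphi - \varphi(0)$ changes $\langle \mu - S^*_h\mu, \varphi\rangle$ by $\varphi(0)(\mu(\infty) - S^*_h\mu(\infty)) = \varphi(0)\,\mu([0,h))$ in absolute value at most $\mu(\infty)h$ (since $\mu([0,h)) = \mu(h) - \mu(0) \le \omega(h;\mu)$, and also $\le \mu(\infty)$, but we want the clean bound $\mu(\infty)h$ only for the mass term — see below), while for the normalized function $|\varphi(x)| \le |x| \le h$ on $(0,h)$, giving a contribution $\le h\,\mu([0,h)) \le h\,\omega(h;\mu)$. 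On $(h,\infty)$, the integrand is $\varphi(x) - \varphi(x-h)$, which is bounded in absolute value by $h$ (Lipschitz), so this piece contributes at most $h\,\mu((h,\infty)) \le h\,\mu(\infty)$. Combining, and absorbing the $\varphi(0)$ correction, yields a bound of the shape $h\,\mu(\infty) + (\text{something} \le \omega(h;\mu))$; a slightly more careful bookkeeping — noting $\mu([0,h)) \le \omega(h;\mu)$ directly — collapses the $(0,h)$ contributions into the single term $\omega(h;\mu)$ and the $(h,\infty)$ contribution into $\mu(\infty)h$, which is exactly \qref{eq:lip-shift}.

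The main obstacle is the boundary layer near $x = h$: the function $S_h\varphi$ has a jump discontinuity there, so one cannot naively treat $\varphi - S_h\varphi$ as a Lipschitz function and bound the pairing by $h\,\mu(\infty)$ alone — the jump of size $|\varphi(0)|$ (up to $1$) near $x=h$ interacts with the mass $\mu$ places in $[0,h)$, and it is precisely this interaction that forces the $\omega(h;\mu)$ term into the estimate. Handling this cleanly is the reason for the normalization trick above: subtracting $\varphi(0)$ moves the jump's effect onto the total-mass discrepancy between $\mu$ and $S^*_h\mu$, which is $\mu([0,h)) \le \omega(h;\mu)$, while leaving a genuinely Lipschitz (indeed $h$-small, on $(0,h)$) remainder. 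Once that is set up, the rest is the routine split-and-estimate described above.
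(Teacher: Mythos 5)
Your proof is correct and follows essentially the same line as the paper: both arguments rewrite $\langle \mu - S^*_h\mu, \varphi\rangle = \langle \mu, \varphi - S_h\varphi\rangle$, split into the integral over $(0,h)$ and the integral over $(h,\infty)$, bound the second by the Lipschitz property to get $h\,\mu(\infty)$, and bound the first by $\omega(h;\mu)$. The one place you diverge is in how the $(0,h)$ piece is handled. The paper bounds it directly: $\left|\int_0^h \varphi\, d\mu\right| \le \|\varphi\|_\infty\,\mu(h) \le \mu(h) \le \omega(h;\mu)$, using $\|\varphi\|_\infty \le \|\varphi\|_{\BL} \le 1$. Your normalization (replace $\varphi$ by $\varphi - \varphi(0)$) is a detour around the same estimate: you produce a correction $\varphi(0)\mu(h)$, which you then bound by $\mu(h)\le\omega(h;\mu)$ anyway, plus an extra $h\,\mu(h)$ from the normalized function on $(0,h)$, which you fold into $h\,\mu(\infty)$. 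The accounting closes, but it is more intricate than needed, and the intermediate assertion $\left|\varphi(0)\mu([0,h))\right|\le\mu(\infty)h$ is false in general (take $\mu=\delta_{h/2}$ with $h<1$ and $\varphi\equiv 1$) -- you do walk it back in the parenthetical, but the exposition would be cleaner without it. One thing your writeup articulates that the paper leaves implicit is the \emph{reason} the $\omega$ term appears: it is the cost of the jump of $S_h\varphi$ at $x=h$, equivalently the mass $\mu(h)$ that the shift $S_h^*$ discards. That observation is correct and is worth stating, but it does not require the normalization to exploit; the direct $\|\varphi\|_\infty$ bound already gives $\mu(h)\le\omega(h;\mu)$.
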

\begin{proof}
Choose a bounded Lipschitz test function $\varphi$ with $\|\varphi\|_{\BL}\leq 1$. We use the definition \qref{eq:shift1} to obtain
\begin{align}
\nn
\langle \mu -S^*_h\mu , \varphi \rangle = \langle \mu, \varphi -S_h\varphi \rangle = \int_0^h \varphi(x) \mu(dx) + \int_h^\infty \left(\varphi(x)-\varphi(x-h) \right) \mu(dx)\\
\nn
\leq \|\varphi\|_\infty \mu(h) + \left(\mathrm{Lip}({\varphi})\right) h \left(\mu(\infty)-\mu(h)\right) \leq \omega(h;\mu) + \mu(\infty) h.
\end{align}
We now take the supremum over all $\varphi$ with $\|\varphi\|_{\BL}\leq 1$ to obtain \qref{eq:lip-shift}. 
\end{proof}
\begin{lemma}
\label{lemma:lip-cont-F}
Assume $f_0\in L_+^1$ and the empirical measures $\empN_0$ is chosen as in \qref{eq:ic}. Then
\be
\label{eq:lip-n}
\omega(h;\empN_0) \leq \omega(h;F_0) + \frac{1}{n}.
\ee
\end{lemma}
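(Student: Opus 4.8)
The plan is to compare the modulus of continuity of the empirical distribution function $\empN_0$ directly against that of $F_0$ by exploiting the explicit quantile construction \qref{eq:ic}. First I would fix $h>0$ and $x\geq 0$, and count the atoms of $\empN_0$ in the interval $[x,x+h)$. Since $\empN_0 = \frac1n\sum_{i=1}^n\delta_{a_i}$ with $a_k = F_0^{-1}\left(\frac{2k-1}{2n}\right)$, we have $n\left(\empN_0(x+h)-\empN_0(x)\right) = \#\{k : x \leq a_k < x+h\}$. The key observation is that $a_k < x+h$ forces $\frac{2k-1}{2n} \leq F_0(x+h)$ (using monotonicity of $F_0$ and the generalized inverse), and $a_k \geq x$ forces $\frac{2k-1}{2n} > F_0(x^-)$, so the admissible indices $k$ lie in an interval of the real line of length at most $n\left(F_0(x+h)-F_0(x^-)\right)$ when measured through the affine map $k\mapsto \frac{2k-1}{2n}$; since $F_0$ is continuous (as $f_0\in L^1_+$), $F_0(x^-)=F_0(x)$.

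Next I would convert this bound on a count of integers in a real interval into the stated estimate. The number of integers $k$ with $\frac{2k-1}{2n}$ lying in a half-open interval of length $\ell$ is at most $n\ell + 1$, because consecutive values $\frac{2k-1}{2n}$ are spaced $\frac1n$ apart. Taking $\ell = F_0(x+h)-F_0(x)$ gives
\be
\nn
n\left(\empN_0(x+h)-\empN_0(x)\right) \leq n\left(F_0(x+h)-F_0(x)\right) + 1 \leq n\,\omega(h;F_0) + 1.
\ee
Dividing by $n$ and taking the supremum over $x\geq 0$ yields \qref{eq:lip-n}.

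The main obstacle — really the only point requiring care — is the handling of the generalized inverse $F_0^{-1}$ and the endpoint conventions, since $F_0$ need not be strictly increasing and the $a_k$ are defined via a (say, left-continuous) inverse. One must check that the implications "$a_k < x+h \Rightarrow \frac{2k-1}{2n}\leq F_0(x+h)$" and "$a_k\geq x \Rightarrow \frac{2k-1}{2n} > F_0(x)$" are the correct ones for the chosen convention, so that the count is genuinely bounded by $n\left(F_0(x+h)-F_0(x)\right)+1$ and not off by an extra unit in the wrong direction; the continuity of $F_0$ is exactly what removes the ambiguity between $F_0(x)$ and $F_0(x^-)$. Everything else is the elementary "integers in an interval" count, so I would not belabor it.
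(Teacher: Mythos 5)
Your proof is correct and rests on the same essential ingredient as the paper's --- the quantile construction \qref{eq:ic} --- but packages it slightly differently. The paper first records the pointwise sandwich bound $\empN_0(x)-\tfrac1{2n}\le F_0(x)\le\empN_0(x)+\tfrac1{2n}$ for all $x$, and then applies it at $x$ and $x+h$; you skip that intermediate (and strictly stronger) statement and directly count the atoms of $\empN_0$ in $[x,x+h)$, bounding them by the number of lattice points $\tfrac{2k-1}{2n}$ in $[F_0(x),F_0(x+h)]$. Both give the result; the paper's route isolates a reusable pointwise estimate, while yours is marginally more self-contained. One small slip: from $a_k\ge x$ and the definition of the generalized inverse you can only conclude $\tfrac{2k-1}{2n}\ge F_0(x^-)$, not the strict inequality you wrote; since the ``$+1$'' slack in the lattice-point count already absorbs the boundary case, this does not affect the final bound, but the inequality should be non-strict.
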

\begin{proof}
The choice of empirical measures in \qref{eq:ic} ensures the lower and upper bounds
\be
\label{eq:bound-emp0}
\empN_0(x) -\frac{1}{2n} \leq F_0(x) \leq \empN_0(x) +\frac{1}{2n}, \quad x \in (0,\infty).
\ee
Therefore,
\be
\empN_0(x+h) -\empN_0(x) \leq F_0(x+h) -F_0(x) + \frac{1}{n} \leq \omega(h;F_0) + \frac{1}{n}.
\ee
\end{proof}
\begin{lemma}
\label{lemma:lip-cont-shift}
For each $t>t_0 \geq 0$, the empirical measure for the particle system satisfies the continuity estimate
\be
\label{eq:lip-evolve}
\ddm\left(\empN(t),S^*_{t-t_0}\empN(t_0)\right) \leq  2\left(\lossN(t)-\lossN(t_0)\right).
\ee
\end{lemma}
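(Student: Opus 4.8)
The plan is to compare the two measures atom by atom in terms of the particle positions at time $t_0$. Write $h = t-t_0$ and let $A$ denote the (random) set of particles alive at time $t_0$, so that $\empN(t_0) = \frac{1}{n}\sum_{i\in A}\delta_{x_i(t_0)}$. From the definition \qref{eq:shift1} of the dual shift one computes $S^*_h\empN(t_0) = \frac{1}{n}\sum_{i\in A,\ x_i(t_0)\ge h}\delta_{x_i(t_0)-h}$: every $t_0$-particle is translated left by $h$, and those whose translated position is negative are discarded. On the interval $(t_0,t]$ the dynamics consists only of leftward drift and particle removals, so the particles alive at time $t$ form a subset $S\subseteq A$, and $\empN(t) = \frac{1}{n}\sum_{i\in S}\delta_{x_i(t_0)-h}$.

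The key step is the inclusion $S\subseteq\{i\in A:\ x_i(t_0)\ge h\}$, which holds because a particle still present at time $t$ has not yet reached the origin, so $x_i(t_0)-h = x_i(t)>0$. Granting this, $\eta := S^*_h\empN(t_0)-\empN(t)$ is a nonnegative measure (it is exactly the collection of shifted atoms of those particles that were present at $t_0$ but had been removed by time $t$), and its total mass is $\frac{1}{n}$ times the number of such particles. Since each jump in $(t_0,t]$ removes exactly two particles and the number of jump times in $(t_0,t]$ equals $n\lossN(t)-n\lossN(t_0)$, this gives $\eta(\Rplus)\le 2\bigl(\lossN(t)-\lossN(t_0)\bigr)$.

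To conclude, fix $\varphi$ with $\|\varphi\|_{\BL}\le 1$; then $\|\varphi\|_\infty\le 1$, so
\be
\nn
\bigl|\langle\empN(t)-S^*_h\empN(t_0),\varphi\rangle\bigr| = \bigl|\langle\eta,\varphi\rangle\bigr| \le \|\varphi\|_\infty\,\eta(\Rplus) \le 2\bigl(\lossN(t)-\lossN(t_0)\bigr),
\ee
and taking the supremum over all such $\varphi$ yields \qref{eq:lip-evolve}. Note that only the sup-norm bound on $\varphi$ is needed, not its Lipschitz constant.

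I expect the only delicate point to be the verification of the inclusion $S\subseteq\{i:x_i(t_0)\ge h\}$ in conjunction with the \cadlag\ conventions at jump times (for example, the bookkeeping when $t_0$ or $t$ is itself a removal time), which is why I would introduce the alive-sets $A$ and $S$ explicitly rather than manipulate distribution functions directly. It is worth remarking that the estimate is not sharp: a particle that reaches the origin in $(t_0,t]$ satisfies $x_i(t_0)\le h$ and is therefore already discarded by $S^*_h$, so only the randomly selected removals actually contribute to $\eta$; this would improve the factor $2$ to $1$, but the cruder count suffices for the sequel.
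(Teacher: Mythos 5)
Your proof is correct and takes essentially the same approach as the paper: decompose the $t_0$-atoms into survivors and removals, note that $\empN(t)-S^*_{t-t_0}\empN(t_0)$ is supported on at most $r-s = 2n\bigl(\lossN(t)-\lossN(t_0)\bigr)$ shifted atoms, and bound the pairing against $\varphi$ by the sup norm alone. Your extra attention to the inclusion $S\subseteq\{i:x_i(t_0)\ge h\}$ and your closing remark that only the uniformly sampled removals actually land in $[h,\infty)$ (so the constant $2$ could be sharpened to $1$) are both sound, but the paper's terser version already gives the needed inequality since it only uses the count $r-s$ as an upper bound on the number of contributing atoms.
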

\begin{proof}
Let $r$ and $s$ denote the number of particles in $\empN(t_0)$ and $\empN(t)$, so that the difference $r-s =2n(\lossN(t)-\lossN(t_0))$. For convenience, let $y_i$, $i=1,\ldots,s$ and $z_j$, $j=1, \ldots,r-s$ denote the particles of $\xx(t_0)$ that are not removed, and  removed by time $t$, respectively. Then
\be
\empN(t_0) = \frac{1}{n} \sum_{i=1}^s \delta_{y_i} +  \frac{1}{n} \sum_{j=1}^{r-s} \delta_{z_j}, \quad  \empN(t) = \frac{1}{n} \sum_{i=1}^s \delta_{y_i-(t-t_0)}.
\ee
Therefore, for each $\varphi \in \BL$ with $\|\varphi\|_{\BL} \leq 1$, 
\be
\left| \langle \empN(t) -S^*_{t-t_0}\empN(t_0), \varphi \rangle \right| \leq  \frac{1}{n} \sum_{i=1}^{r-s} \|\varphi\|_{\infty} \leq 2 \left(\lossN(t)-\lossN(t_0)\right).
\ee
\end{proof}
\begin{lemma}
\label{lemma:rho-cont}
For each $t>t_0 \geq 0$, 
\be
\label{eq:rho-cont}
\left| \rho(t)-\rho(t_0)\right| \leq \omega(t-t_0;F_0).
\ee
\end{lemma}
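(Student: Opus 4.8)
The plan is to prove Lemma~\ref{lemma:rho-cont} by reducing the quantity $|\rho(t)-\rho(t_0)|$ to a statement about the modulus of continuity of $F_0$ defined in \qref{eq:shift2}. Recall from \qref{eq:rho-L} (or the normalized formula \qref{eq:sok-formula-corr}) that $\rho(t) = 1 - F_0(t) = \int_t^\infty f_0(r)\,dr$, so that $\rho$ is a continuous, non-increasing function of $t$.

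First I would write, for $t > t_0 \geq 0$,
\be
\nn
\left| \rho(t) - \rho(t_0) \right| = \left| (1 - F_0(t)) - (1 - F_0(t_0)) \right| = F_0(t) - F_0(t_0),
\ee
where the last equality uses that $F_0$ is non-decreasing, so the quantity inside the absolute value is non-negative. Next I would observe that, setting $h = t - t_0 > 0$, the right-hand side is $F_0(t_0 + h) - F_0(t_0)$, which is bounded above by the supremum over all $x \geq 0$ of $F_0(x+h) - F_0(x)$; that supremum is exactly $\omega(h; F_0) = \omega(t-t_0; F_0)$ by definition \qref{eq:shift2} (with $\mu = F_0$). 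Combining these two displays yields \qref{eq:rho-cont} immediately.

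There is essentially no obstacle here: the lemma is a direct unwinding of definitions, using only monotonicity of $F_0$ and the fact that $\rho = 1 - F_0$ under the standing normalization. The only point worth a word of care is the direction of the inequality — since $f_0 \geq 0$, $\rho$ decreases while $F_0$ increases, so one must take the absolute value correctly — but this is routine. The purpose of the lemma in the larger argument is to package the time-continuity of the scaling factor $\rho(t)$ in the same language ($\omega(\cdot; F_0)$) as the spatial regularity of $F_0$ and of the empirical measures (Lemmas~\ref{lemma:lip-cont-mu}--\ref{lemma:lip-cont-shift}), so that all the regularity inputs needed for the uniform concentration estimate of Theorem~\ref{thm:conc-emp} can be controlled by a single modulus-of-continuity parameter.
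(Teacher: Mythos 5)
Your proof is correct and follows essentially the same route as the paper: you both identify $|\rho(t)-\rho(t_0)|$ with $F_0(t)-F_0(t_0)=\int_{t_0}^t f_0(x)\,dx$ and then bound that increment by the modulus $\omega(t-t_0;F_0)$ from \qref{eq:shift2}. The only cosmetic difference is that the paper writes the middle quantity as the integral of $f_0$ while you phrase it directly in terms of $F_0$; the content is identical.
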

\begin{proof}
This follows immediately from equation \qref{eq:def-disbn} and \qref{eq:shift2}
\be
\left| \rho(t)-\rho(t_0)\right| \leq \int_{t_0}^t f_0(x) \, dx \leq \omega(t-t_0;F_0).
\ee
\end{proof}

We will combine these estimates to control $\ddm(\empN(t),\rho(t)S_t^*\empN_0)$ on a finite interval $[0,T]$.  To this end, consider 
$T>0$, a positive integer $N$ (fixed; to be chosen later), and 
the uniformly spaced grid
\be
\label{eq:grid}
0=t_0 < t_1 < \ldots < t_N=T, \quad t_{i+1}-t_i =\frac{T}{N}:= h_N, \quad i=0, \ldots, N-1.
\ee
\begin{lemma}
\label{lemma:inter}     
\ba
\label{eq:inter1}
\lefteqn{\sup_{t \in [0,T]} \ddm\left(\empN(t), \rho(t)S_t^*\empN_0\right)}\\
\nn 
&&\leq \max_{0 \leq i\leq N} \ddm\left(\empN(t_i), \rho(t_i)S_{t_i}^*\empN_0\right) + 4 \left( \|\lossN -L\|_{L^\infty} + \omega(h_N;F_0) + h_N + \frac{1}{n}\right). 
\ea
\end{lemma}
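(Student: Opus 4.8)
The plan is to bound the distance at an arbitrary time $t \in [0,T]$ by the distance at the nearest grid point $t_i \le t$, paying for the error incurred by (i) evolving the empirical measure from $t_i$ to $t$ and (ii) evolving the target $\rho(t)S_t^*\empN_0$ from $\rho(t_i)S_{t_i}^*\empN_0$. Fix $t \in [0,T]$ and let $i$ be the index with $t \in [t_i, t_{i+1})$, so $0 \le t - t_i \le h_N$. By the triangle inequality for $\ddm$,
\begin{align}
\nn
\ddm\left(\empN(t), \rho(t)S_t^*\empN_0\right) \leq{}& \ddm\left(\empN(t), S^*_{t-t_i}\empN(t_i)\right) + \ddm\left(S^*_{t-t_i}\empN(t_i), S^*_{t-t_i}\rho(t_i)S^*_{t_i}\empN_0\right)\\
\nn
&{}+ \ddm\left(S^*_{t-t_i}\rho(t_i)S^*_{t_i}\empN_0, \rho(t)S^*_t\empN_0\right).
\end{align}
The first term is controlled by Lemma~\ref{lemma:lip-cont-shift}: it is at most $2(\lossN(t)-\lossN(t_i))$, and since $\lossN$ and $L$ are both increasing with $\|\lossN - L\|_{L^\infty}$ small and $L(t)-L(t_i) \le \omega(h_N; F_0)$ by Lemma~\ref{lemma:rho-cont} (using $f_0 \in L^1_+$), this is bounded by $2\omega(h_N;F_0) + 4\|\lossN - L\|_{L^\infty}$ or similar — in any case absorbed into the stated error.

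For the second term, I would use that applying a shift $S^*_{t-t_i}$ is $1$-Lipschitz with respect to $\ddm$ when tested against $\|\varphi\|_{\BL} \le 1$ functions (since $S_{t-t_i}\varphi$ has sup-norm at most $\|\varphi\|_\infty$ and Lipschitz constant at most $\Lip(\varphi)$), so this term is at most $\ddm\left(\empN(t_i), \rho(t_i)S^*_{t_i}\empN_0\right)$, which is exactly the grid-point quantity appearing on the right side of \qref{eq:inter1}. For the third term, note that both measures are multiples of shifts of $\empN_0$: using $S^*_{t-t_i}S^*_{t_i} = S^*_t$, the difference is $\big(\rho(t_i) - \rho(t)\big)S^*_t\empN_0$ tested against $\varphi$, plus nothing else, so it is bounded by $|\rho(t_i)-\rho(t)|\,\empN_0(\infty) \le \omega(h_N;F_0)$ by Lemma~\ref{lemma:rho-cont} and $\empN_0(\infty)=1$. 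Wait — I should be careful: $\rho(t)S^*_t\empN_0$ versus $\rho(t_i)S^*_t\empN_0$ differ only by the scalar factor, and $\ddm$ of two scalar multiples $c_1\lambda, c_2\lambda$ of a probability-mass-$1$ measure $\lambda$ is $|c_1-c_2|\lambda(\infty)$; this gives the $\omega(h_N;F_0)$ contribution cleanly, with the $1/n$ coming from replacing $\empN_0$ bounds via Lemma~\ref{lemma:lip-cont-F} wherever $\omega(h_N;\empN_0)$ rather than $\omega(h_N;F_0)$ appears.

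Collecting: the grid-point term contributes $\max_{0\le i\le N}\ddm(\empN(t_i),\rho(t_i)S^*_{t_i}\empN_0)$, and every other term is bounded by a constant (here $4$, chosen generously to cover all cross-terms and the index shifts) times $\|\lossN - L\|_{L^\infty} + \omega(h_N;F_0) + h_N + \frac1n$, which is \qref{eq:inter1}. Taking the supremum over $t \in [0,T]$ on the left is harmless since the right side no longer depends on $t$. The main obstacle is the bookkeeping in the second and third terms: one must verify that $S^*_h$ does not expand $\ddm$-distance (true, but needs the observation that $S_h\varphi$, though discontinuous, still has $\|S_h\varphi\|_\infty \le \|\varphi\|_\infty$ and slope bounded by $\Lip(\varphi)$ away from the jump, and the jump at $x=h$ is only "felt" by the mass $\mu$ places near $h$ — so strictly one gets $\ddm(S^*_h\mu, S^*_h\nu) \le \ddm(\mu,\nu) + (\text{terms of order }\omega)$, not a clean contraction), and that the resulting extra $\omega$-type terms are genuinely of the advertised order. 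This is why the constant $4$ and the error budget are stated with room to spare.
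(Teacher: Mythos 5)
Your overall strategy---discretize $[0,T]$, compare $t$ to the nearest grid point $t_i$, use the triangle inequality and the regularity lemmas---is the same as the paper's. The organizational difference is that you push the shift $S^*_{t-t_i}$ into the middle term, comparing $S^*_{t-t_i}\empN(t_i)$ against $S^*_{t-t_i}\rho(t_i)S^*_{t_i}\empN_0$, whereas the paper compares $\empN(t)$ directly to $\empN(t_i)$ and absorbs the shift cost through Lemma~\ref{lemma:lip-cont-mu} applied to $\empN(t_i)$ against its own shift (and similarly for the target). Both routes assemble the same ingredients; yours trades the $h_N$-term from Lemma~\ref{lemma:lip-cont-mu} for additional modulus-of-continuity terms from the shift-contraction step, which is a perfectly viable bookkeeping variant.

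However, the step you initially lean on is false as stated: $S^*_h$ is \emph{not} $1$-Lipschitz for $\ddm$. Since $S_h\varphi$ has a jump of size $\varphi(0)$ at $x=h$, the class $\{S_h\varphi:\|\varphi\|_{\BL}\le 1\}$ is not contained in the BL unit ball, and $\ddm(S^*_h\mu,S^*_h\nu)$ can be much larger than $\ddm(\mu,\nu)$. For instance with $\mu=\delta_{h-\veps}$ and $\nu=\delta_{h+\veps}$ one has $\ddm(\mu,\nu)=O(\veps)$ but $S^*_h\mu=0$, $S^*_h\nu=\delta_\veps$, so $\ddm(S^*_h\mu,S^*_h\nu)=1$. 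You do flag this in your closing paragraph, but the fix is left at the level of an assertion. The precise correction: writing $S_h\varphi=\tilde\varphi-\varphi(0)\one_{[0,h)}$ where $\tilde\varphi$ is the continuous extension (constant on $[0,h)$), one gets
\begin{equation}
\nn
\ddm\left(S^*_h\mu,S^*_h\nu\right)\le \ddm(\mu,\nu)+\mu([0,h))+\nu([0,h)) \le \ddm(\mu,\nu)+\omega(h;\mu)+\omega(h;\nu),
\end{equation}
and then one needs the separate observation that $\omega(h;\empN(t_i))\le\omega(h;\empN_0)$ --- because $\empN(t_i)$ is a shifted thinning of $\empN_0$ and thinning a point set cannot increase the amount of mass in an interval --- so that Lemma~\ref{lemma:lip-cont-F} applies to push everything onto $\omega(h_N;F_0)+1/n$. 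This ``thinning does not worsen the modulus'' observation is also used implicitly by the paper when it replaces $\omega(h_N;\empN(t_i))$ by $\omega(h_N;\empN_0)$; it is a necessary step, not an afterthought. Once you state the corrected shift estimate and the thinning-modulus bound, your proof goes through and delivers the advertised error budget.
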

\begin{proof}
For $t \in [t_i,t_{i+1})$ we have
\begin{align}
\nn
\ddm\left(\empN(t), \empN(t_i)\right) \leq 
\ddm\left(\empN(t), S_{t-t_i}^*\empN(t_i)\right) + \ddm\left(S_{t-t_i}^*\empN(t_i), \empN(t_i)\right)\\
\nn
\leq 2\left(\lossN(t)-\lossN(t_i)\right) + \omega(h_N;\empN_0) +h_N \\
\label{eq:inter2}
\leq 2\left(\lossN(t_{i+1})-\lossN(t_i)\right) + \omega(h_N;F_0) + h_N+ \frac{1}{n}.
\end{align}
We have used Lemma~\ref{lemma:lip-cont-mu} and Lemma~\ref{lemma:lip-cont-shift} in the first inequality, and Lemma~\ref{lemma:lip-cont-F} in the second. We will control the jumps in the empirical loss as follows
\begin{align}
\nn     
\lossN(t_{i+1})-\lossN(t_i) \leq 2\|\lossN -L\|_{L^\infty} + L(t_{i+1})-L(t_{i}) \\
\nn
\stackrel{\qref{eq:Ldef}}{=} 2\|\lossN -L\|_{L^\infty} + \frac{1}{2}\left(\rho^2(t_{i+1})-\rho^2(t_{i})\right)\\
\label{eq:inter3} \leq 2\|\lossN -L\|_{L^\infty} + \omega(h_N;F_0),
\end{align}
using Lemma~\ref{lemma:rho-cont} in the last step. Finally, we write
\ba
\nn
\lefteqn{\ddm\left(\empN(t), \rho(t)S_t^*\empN_0\right)}\\
\nn
&& \leq         \ddm\left(\empN(t), \empN(t_i)\right) + \ddm\left(\empN(t_i), \rho(t_i)S_{t_i}^*\empN_0\right) + \ddm\left(\rho(t_i)S_{t_i}^*\empN_0, \rho(t)S_t^*\empN_0\right),
\ea
and apply the inequalities above. The first term is controlled by \qref{eq:inter2} and \qref{eq:inter3}. The second term is controlled by taking the maximum over the finite set $0 \leq i \leq N$. The last term is controlled by Lemma~\ref{lemma:rho-cont}.
\end{proof}
\begin{proof}[Proof of Theorem~\ref{thm:conc-emp}]
{\em 1.\/} Let us first separate the effect of difference in initial conditions. Clearly,
\begin{align}
\nn
\ddm \left(\empN(t), \rho(t)S_t^*F_0\right) \leq \ddm\left(\empN(t), \rho(t)S_t^*\empN_0\right)  + \ddm \left( \rho(t)S_t^*\empN_0, \rho(t)S_t^*F_0\right)\\
\nn
\leq \ddm\left(\empN(t), \rho(t)S_t^*\empN_0\right) + \ddm\left(\empN_0, F_0\right).
\end{align}
We increase $n_\veps$ if necessary, so that $\ddm\left(\empN_0, F_0\right)< \veps$ for $n \geq n_\veps$.

\medskip
{\em 2.\/} Given $\veps>0$ we may choose $N$ sufficiently large that $\omega(h_n;F_0)+h_N <\veps/2$. We may further increase $n_\veps$ if necessary so that $n_\veps^{-1} < \veps/2$. Lemma~\ref{lemma:inter} then implies that 
\be
\label{eq:inter4}
\sup_{t \in [0,T]} \ddm\left(\empN(t), \rho(t)S_t^*\empN_0\right) \leq  \max_{0 \leq i\leq N} \ddm\left(\empN(t_i), \rho(t_i)S_{t_i}^*\empN_0\right) + 4 \|\lossN -L\|_{L^\infty} + \veps. 
\ee

\medskip
{\em 3.\/} We combine steps 1 and 2 to obtain
\ba
\nn
\lefteqn{\prob\left(  \sup_{t \in [0,T]} \ddm\left(\empN(t), \rho(t)S_t^*F_0\right) >3 \veps \right) \leq \prob\left(  \sup_{t \in [0,T]} \ddm\left(\empN(t), \rho(t)S_t^*\empN_0\right) >2 \veps \right) } \\
\nn
&& \leq \prob\left( \max_{0 \leq i\leq N} \ddm\left(\empN(t_i), \rho(t_i)S_{t_i}^*\empN_0\right) > \frac{\veps}{2}\right) + \prob\left( 4 \|\lossN -L\|_{L^\infty} > \frac{\veps}{2} \right) \\
\nn
&& \leq \sum_{i=1}^N \prob\left( \ddm\left(\empN(t_i), \rho(t_i)S_{t_i}^*\empN_0\right) > \frac{\veps}{2}\right) + \prob\left(\|\lossN -L\|_{L^\infty} > \frac{\veps}{8} \right).
\ea
The probability that $\|\lossN -L\|_{L^\infty} >\veps/8$ is controlled by Theorem~\ref{thm:conc-L}, and the probability that $\ddm\left(\empN(t_i), \rho(t_i)S_{t_i}^*\empN_0\right) >\veps/2$ at each $t_i$ is controlled by Theorem~\ref{thm:one-point}.
\end{proof}

\section{Acknowledgements}
This work was supported by NSF grant DMS 14-11278. The authors are grateful to Bob Pego for several discussions regarding~\cite{JK}. GM thanks the International Center for Theoretical Sciences (ICTS) and the Raman Research Institute (RRI) in Bangalore for their hospitality during the preparation of this manuscript.

%\section{Conflicts of Interest}
%The authors declare that they have no conflict of interest.

\bibliographystyle{siam}
\bibliography{km}

\end{document}